\theoremstyle{plain}
\newtheorem{thm}{Theorem}[section]
\newtheorem*{thm*}{Theorem}
\newaliascnt{prop}{thm}
\newaliascnt{cor}{thm}
\newaliascnt{lem}{thm}
\newaliascnt{claim}{thm}
\newaliascnt{defn}{thm}
\newaliascnt{ques}{thm}
\newaliascnt{conj}{thm}
\newaliascnt{fact}{thm}
\newaliascnt{rem}{thm}
\newaliascnt{ex}{thm}
\newtheorem{prop}[prop]{Proposition}
\newtheorem{cor}[cor]{Corollary}
\newtheorem{lem}[lem]{Lemma}
\newtheorem{claim}[claim]{Claim}
\newtheorem*{prop*}{Proposition}
\newtheorem*{cor*}{Corollary}
\newtheorem*{lem*}{Lemma}
\newtheorem*{claim*}{Claim}
\theoremstyle{definition}
\newtheorem{defn}[defn]{Definition}
\newtheorem*{defn*}{Definition}
\newtheorem*{ques*}{Question}
\newtheorem*{conj*}{Conjecture}
\newtheorem*{prob*}{Problem}
\newtheorem{rem}[rem]{Remark}
\newtheorem{ex}[ex]{Example}
\newtheorem*{fact*}{Fact}
\newtheorem*{rem*}{Remark}
\newtheorem*{ex*}{Example}
\def\textsectionN~{\textsection{}}
\def\equationautorefname~#1\null{#1\null}
\renewcommand\phi{\varphi}
\renewcommand\epsilon{\varepsilon}
\renewcommand\leq{\leqslant}
\renewcommand\geq{\geqslant}
\newcommand\num{\mathbb}\makeatletter
\newcommand{\set}{  \@ifstar{\@setstar}{\@set}}\newcommand{\@setstar}[2]{\{\, #1 \mid #2 \,\}}
\newcommand{\@set}[1]{\{\, #1 \,\}}
\newcommand{\lin}[1]{\langle\, #1 \,\rangle}
\newcommand{\ZZ}{\num{Z}}
\newcommand{\CC}{\num{C}}
\DeclareMathOperator{\ev}{ev}\DeclareMathOperator{\Cone}{Cone}\DeclareMathOperator{\Hilb}{Hilb}\DeclareMathOperator{\im}{im}\DeclareMathOperator{\Pic}{Pic}\DeclareMathOperator{\Sing}{Sing}\DeclareMathOperator{\Loc}{Loc}
\newcommand{\sO}{\ensuremath{\mathscr{O}}}
\newcommand{\Gr}{\mathbb{G}}
\newcommand{\TT}{\mathbb{T}}
\newcommand{\PP}{{\mathbb{P}}}
\newcommand{\Pn}{\PP^n}
\newcommand{\Lxy}{\Loc(R_{xy}')}
\newcommand{\sU}{\mathscr{U}}
\newcommand{\sUs}{\mathscr{U}^{\star}}
\newcommand{\sUU}{\sU \times_{R} \sU}
\newcommand\spcirc{^\circ}
\newcommand{\pr}{\mathrm{pr}}
\newcommand{\Rs}{R^{\star}}
\newcommand{\Lsx}[1][x]{\Loc({\Rs_{#1}}')}
\newcommand{\Pv}[1][n]{(\PP^{#1})\spcheck}
\newcommand{\gen}{\text{:\,general}}
\newcommand{\as}{$(d, \dim Y) = (6, n-3)$ and the formula~\autoref{eq:r-geq-exdim}}
\title[Conics on Fano hypersurfaces]
{Dimension of the space of conics on \\ Fano hypersurfaces}
\author[K.~Furukawa]{Katsuhisa~FURUKAWA}
\address{Graduate School of Mathematical Sciences, 
the University of Tokyo, Tokyo, Japan}
\email{katu@ms.u-tokyo.ac.jp}
\subjclass[2010]{14C05, 14J70, 14N05}
\keywords{space of rational curves, Fano hypersurface, expected dimension}
\begin{document}

\maketitle

\begin{abstract}
  R. Beheshti showed that, for a smooth Fano hypersurface $X$ of degree $\leq 8$ over the complex number field $\CC$, the dimension of the space of lines lying in $X$ is equal to the expected dimension.

  We study the space of conics on $X$. In this case, if $X$ contains some linear subvariety, then the dimension of the space can be larger than the expected dimension.

  In this paper, we show that, for a smooth Fano hypersurface $X$ of degree $\leq 6$ over $\CC$, and for an irreducible component $R$ of the space of conics lying in $X$, if the $2$-plane spanned by a general conic of $R$ is not contained in $X$, then the dimension of $R$ is equal to the expected dimension.
\end{abstract}

\section{Introduction}

Let $X \subset \Pn$ be a hypersurface of degree $d$ over the complex number field $\CC$. 
We define $R_e (X)$ to be the space of smooth rational curves of degree $e$ in $\PP^n$ lying in $X$,
which is an open subscheme of the Hilbert scheme $\Hilb^{et + 1} (X)$.
The number
\[
{(n+1-d)e+n-4}
\]
is called the \emph{expected dimension} of $R_e (X)$,
where the dimension of $R_e (X)$ at $C$ is greater than or equal to this number
if there exists $C \in R_e (X)$ such that $X$ is smooth along $C$.

The space of rational curves on a \emph{general} Fano hypersurface
have been studied by many authors
(\cite{BV}, \cite{HRS}, \cite{HS}, \cite{St}, \cite{BK}, \cite{RY} in characteristic zero; \cite[V, \textsection{}4]{Ko}, \cite{Fu} in any characteristic).
At least for $e=1,2$, it is well known that
$R_e(X)$ has the expected dimension
if $X$ is a general Fano hypersurface.

On the other hand, it is difficult to know about $R_e(X)$ for \emph{any smooth} $X \subset \Pn$.
For $n$ exponentially large in $d$, T.~D.~Browning and P.~Vishe \cite{BV} showed that the space of rational curves of any degree $e$ on smooth $X$ has the expected dimension.

For any $n$ in the case of $e=1$,
as an answer of the question which was asked by O.~Debarre and J.~de~Jong independently, 
R.~Beheshti \cite{Be1} showed that $R_1(X)$ has the expected dimension
if $X \subset \Pn$ is a smooth Fano hypersurface of degree $d \leq 6$.
J.~M.~Landsberg and C.~Robles \cite{LR} gave another proof for the same degree $d \leq 6$.
Beheshti \cite{Be2} later showed the same statement for $d \leq 8$.

In the case of $e=2$, A.~Collino, J.~P.~Murre, G.~E.~Welters \cite{CMW} studied $R_2(X)$ for a smooth quadric 3-fold $X \subset \PP^4$; in this case, $R_2(X)$ has the expected dimension.
Note that $\Hilb^{2t+1}(X)$ is connected for any smooth hypersurface $X \subset \Pn$
if the expected dimension is positive \cite[Proposition 5.6]{Fu}.

In this paper, we study the dimension of $R_2(X)$ for a smooth Fano hypersurface of degree $d \leq 6$.
Our main result is the following.

\begin{thm}\label{thm:main}
  Let $X \subset \Pn$ be a smooth Fano hypersurface of degree $d \leq 6$ over $\CC$.
  Let $R \neq \emptyset$ be an irreducible component of $R_2(X)$ such that
  \begin{equation}\label{eq:1}
    \text{$\lin{C} \not\subset X$ for general $C \in R$},
  \end{equation}
  where $\lin{C} = \PP^2 \subset \Pn$ is the $2$-plane spanned by $C$.
  Then $\dim (R)$ is equal to the expected dimension $3n-2d-2$.
\end{thm}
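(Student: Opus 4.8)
The plan is to prove the two bounds $\dim R\ge 3n-2d-2$ and $\dim R\le 3n-2d-2$ separately. The lower bound is automatic and does not use the hypothesis $\lin{C}\not\subset X$: since $X$ is smooth it is smooth along every $C\in R$, so $\dim_{[C]}\Hilb^{2t+1}(X)\ge \chi(N_{C/X})$, and a Riemann--Roch computation on $C\cong\PP^1$---using $N_{X/\Pn}|_C=\sO_C(d)$ of degree $2d$ together with $N_{C/\Pn}\cong\sO(4)\oplus\sO(2)^{\oplus(n-2)}$---gives $\chi(N_{C/X})=3n-2d-2$. Thus the whole content is the upper bound. Since the Zariski tangent space of the Hilbert scheme at $[C]$ is $H^0(N_{C/X})$, one always has $\dim_{[C]}R\le \dim H^0(N_{C/X})=\chi(N_{C/X})+\dim H^1(N_{C/X})$, so it suffices to show $H^1(N_{C/X})=0$ for a general $C\in R$.

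The hypothesis $\lin{C}\not\subset X$ enters through the plane section. For general $C\in R$ the plane $P:=\lin{C}$ is not contained in $X$, so, writing $f$ for the equation of $X$, the quadratic form of $C$ divides $f|_P$ and $X\cap P=C\cup D$ with $D$ a residual plane curve of degree $d-2$. This turns the vanishing of $H^1$ into a transversality statement: parametrising smooth conics by the $(3n-1)$-dimensional family $\mathcal K$, the subscheme $R$ is cut out by $f|_C=0\in H^0(C,\sO_C(d))=H^0(\PP^1,\sO(2d))$, and the differential of $C\mapsto f|_C$ is the coboundary map $H^0(N_{C/\Pn})\to H^0(\sO_C(2d))$ induced by $df$; as $H^1(N_{C/\Pn})=0$, its cokernel is exactly $H^1(N_{C/X})$.

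To analyse this map I would use the splitting $N_{C/\Pn}\cong\sO(4)\oplus\sO(2)^{\oplus(n-2)}$. The $\sO(4)$-summand (planar deformations) maps by multiplication by $h|_C\in H^0(\sO_C(d-2))=H^0(\PP^1,\sO(2d-4))$, whose zero scheme is precisely $Z:=C\cap D$, of length $2d-4$ by B\'ezout; hence the image of this summand is the space of sections of $\sO_C(2d)$ vanishing on $Z$, and $H^1(N_{C/X})=\coker\big(H^0(\sO(2))^{\oplus(n-2)}\to H^0(\sO_Z(2d))\big)$, the cokernel of the ``out-of-plane'' derivatives of $f$ restricted to $Z$. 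So everything reduces to showing that, for general $C\in R$, these transverse data of $f$ are nondegenerate on the length-$(2d-4)$ scheme $Z=C\cap D$ of contact points of the conic with its residual curve.

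The hard part is exactly to rule out an excess component---an irreducible $R$ along which this transverse map drops rank for every member, equivalently along which the normal Hessians of the smooth hypersurface $X$ degenerate simultaneously at all $2d-4$ points where $P\subset T_xX$ (these being the singular points of $X\cap P$, and hence points of $Z$). Here both the smoothness of $X$ and the bound $d\le 6$ must be used. Concretely I would run a secondary dimension count on $\Sigma:=\overline{\pi(R)}\subset\Gr(3,n+1)$, the locus of planes whose section carries a conic component; the span map $\pi\colon R\to\Gr(3,n+1)$ has finite fibres over a general point of its image by the hypothesis $\lin{C}\not\subset X$ (a fixed degree-$d$ form on $P$ has only finitely many conic factors), so $\dim R=\dim\Sigma$, and one stratifies $\Sigma$ by the degeneracy type of the transverse derivatives on $Z$ and by the moduli of the residual curve $D$ and its contact with $X$. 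The expectation is that forcing such a simultaneous rank drop costs at least $2d-4$ parameters, which for $d\le 6$ outweighs what the residual curve can contribute, yielding $\dim\Sigma\le 3n-2d-2$; the numerics are delicate precisely in this range, and for larger degree genuine excess (as well as conics whose span lies in $X$) appears, which is why the statement is confined to $d\le 6$ under the hypothesis $\lin{C}\not\subset X$.
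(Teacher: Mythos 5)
Your setup is correct as far as it goes: the lower bound via $\chi(N_{C/X})=3n-2d-2$, the identification of $H^1(N_{C/X})$ with the cokernel of $H^0(N_{C/\Pn})\to H^0(\sO_C(2d))$, and the reduction (using the splitting $N_{C/\Pn}\cong\sO(4)\oplus\sO(2)^{\oplus(n-2)}$ and the factorization $f|_P=q\cdot h$) to surjectivity of the out-of-plane derivatives of $f$ onto $H^0(\sO_Z(2d))$, $Z=C\cap D$, are all sound linear algebra. But the proof stops exactly where the theorem begins. The entire content of the statement is the claim that a general member of an excess component cannot exhibit a simultaneous rank drop of this map, and your treatment of it is literally ``the expectation is that forcing such a simultaneous rank drop costs at least $2d-4$ parameters \ldots the numerics are delicate precisely in this range.'' No stratification of $\Sigma$ is defined, no count is performed, and neither the smoothness of $X$ nor the bound $d\le 6$ is ever actually used --- they are only announced as the places where they ``must be used.'' Since the paper's own examples (the Fermat sextic in $\PP^7$, and a degree-$10$ hypersurface in $\PP^{10}$ satisfying condition \autoref{eq:1} whose $R_2$ has excess dimension) show that no purely local/formal argument of this shape can succeed, the missing step is not routine bookkeeping: it is the theorem.

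For contrast, the paper's proof is global and proceeds by contradiction: assuming $\dim R>3n-2d-2$, it invokes Beheshti's theorem to force $\dim\Loc(R)\le n-3$, pins down $(d,\dim\Loc(R))=(6,n-3)$ by a double-evaluation dimension count, and then runs a chain of projective-geometric arguments --- the span $\lin{\Loc(R)}$ has codimension $\le 1$ (\autoref{thm:linY-n-1-pl}), the loci $\Loc(\Rs_x)$ of conics tangent at a point are quadric hypersurfaces in an $(n-3)$-plane (\autoref{claim:quad}), which caps $n$ at $7$ and reduces everything to $n=d=6$ --- before reaching a contradiction via the classification of surfaces with two-dimensional families of conics and the Lefschetz theorem on $\Pic$ of a smooth $3$-fold in $\PP^4$. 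The finiteness of the Gauss map of the smooth hypersurface $X$ (Zak) is the vehicle through which smoothness enters, repeatedly. None of this machinery, nor any substitute for it, appears in your proposal; also note that your sufficient condition $H^1(N_{C/X})=0$ at a general $C\in R$ is a priori stronger than the dimension statement itself, so even the framing commits you to more than what is claimed. As it stands, the proposal is an honest reduction plus a conjecture, not a proof.
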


The dimension of $R_2(X)$ can be greater than the expected one
when $X$ contains certain linear varieties (see \autoref{thm:ex-Fermat});
this is the reason why we assume the condition~\autoref{eq:1}.
The statement of \autoref{thm:main} does not hold for $d \geq 10$ (see \autoref{thm:ex-sub-Cone});
thus it may need some conditions stronger than \autoref{eq:1} for larger $d$.
\\

The paper is organized as follows.
We assume that $\dim R$ is greater than the expected dimension,
and take $Y := \Loc(R) \subset X$ to be the locus swept out by conics of $R$.
The codimension of $Y$ is $\geq 2$ in $X$ due to
a result of Beheshti \cite{Be2} (see \autoref{thm:Yn-3}).
Then it is sufficient to investigate the case when $(d, \dim Y) = (6, n-3)$.
In \autoref{sec:locus-swept-out-1},
we consider the linear subvariety $\lin{Y} \subset \Pn$ spanned by $Y$,
and show that the codimension of $\lin{Y}$ is $\leq 1$ in $\Pn$ by using projective techniques (\autoref{thm:linY-n-1-pl}).
Let $\TT_xX \subset \Pn$ be the embedded tangent space to $X$ at $x$.
In \autoref{sec:special-point-conic},
considering the subset $\Rs_x \subset R$ consisting of conics $C$
such that $x \in C \subset \TT_xX$, we show that
$\lin{\Loc(\Rs_x)}$ is an $(n-3)$-plane (\autoref{thm:Px-const}),
and show that
$\Loc(\Rs_x)$ is a quadric hypersurface in $\lin{\Loc(\Rs_x)} = \PP^{n-3}$
(\autoref{claim:quad}).
In particular, our problem is reduced to the case $n=d=6$.
Using such quadrics,
we give the proof of \autoref{thm:main} step by step.

\subsection*{Acknowledgments}
The author would like to thank
Professor Hiromichi Takagi
for many helpful comments and advice.
The author was supported by the Grant-in-Aid for JSPS fellows,
No.~16J00404.

\section{The locus swept out by conics}\label{sec:locus-swept-out}
\label{sec:locus-swept-out-1}

We use the following notations.
For a Fano hypersurface $X \subset \Pn$ of degree $d \leq 6$,
we take an irreducible component $R \neq \emptyset$ of $R_2(X)$
such that $\lin{C} \not\subset X$ for general $C \in R_2(X)$.
We denote by $\bar R$ the closure in $\Hilb^{2t+1}(X)$.
Let
\[
\sU := \set*{(C, x) \in R \times X}{x \in C}
\]
be the universal family of $R$,
and let $\pi: \sU \rightarrow R$ and $\ev: \sU \rightarrow X$
be the first and second projections.
For a subset $A \subset R$, we write $\sU_A := \pi^{-1}(A)$ and
$\Loc(A) := \overline{\ev(\sU_A)} = \overline{\bigcup_{C \in A} C} \subset X$.

We write $R_x \subset R$ to be the set of $C \in R$ passing through $x \in X$,
and write $R_{xy} = R_x \cap R_y \subset R$,
the set of $C \in R$ passing through $x,y \in X$.

We set $\lin{S_1\cdots S_m} \subset \Pn$ to be the linear variety spanned by subsets $S_1, \dots, S_m \subset \Pn$.
For example, $\lin{xy} \subset \Pn$ is the line passing through $x,y \in \Pn$,
and $\lin{C} \subset \Pn$ is the $2$-plane spanned by $C$ for a conic $C \subset \Pn$.
\\

The condition~\autoref{eq:1} in \autoref{thm:main} gives the following basic property for $R_{xy}$.

\begin{lem}\label{thm:URxy-Loc-inj}
  Let $R_{xy}'$ be an irreducible component of $R_{xy}$
  such that $\lin{C} \not\subset X$ for general $C \in R_{xy}'$. Then
  \[
  \sU_{R_{xy}'} \rightarrow \Loc(R_{xy}') \subset X
  \]
  is generically finite; moreover a fiber at $z \in \Loc(R_{xy}')$ is of positive dimension
  only if $\lin{xyz} \subset X$.
  In particular, $\dim\Loc(R_{xy}') = \dim(R_{xy}')+1$.
\end{lem}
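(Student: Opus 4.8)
The plan is to analyze directly the fibers of the evaluation map $\ev \colon \sU_{R_{xy}'} \to \Loc(R_{xy}')$, $(C,z) \mapsto z$. The fiber over a point $z$ is naturally identified with the set of conics $C \in R_{xy}'$ passing through all three points $x$, $y$, $z$ (every $C \in R_{xy}'$ already contains $x$ and $y$ by definition of $R_{xy}$). The geometric heart of the argument is that three distinct points lying on a common smooth conic are never collinear: so whenever $z \notin \{x,y\}$ lies on some conic of the fiber, the points $x,y,z$ span a fixed $2$-plane $P := \lin{xyz}$, and every conic in the fiber is forced to satisfy $\lin{C} = P$. This rigidifies the fiber enough to control its dimension.

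First I would establish the characterization of positive-dimensional fibers via its contrapositive. Suppose $\lin{xyz} \not\subset X$. If $x,y,z$ are collinear, then no smooth conic passes through all three, so the fiber is empty. Otherwise $P = \lin{xyz}$ is a $2$-plane with $P \not\subset X$, and then $P \cap X$ is a fixed plane curve of degree $d$ inside $P \cong \PP^2$. Every conic $C$ in the fiber satisfies $C \subset P \cap X$, so $C$ is a union of irreducible components of the reduced curve $(P \cap X)_{\mathrm{red}}$; as there are only finitely many such components, the fiber is finite. Hence a fiber of positive dimension forces $\lin{xyz} \subset X$.

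Next I would deduce generic finiteness from the hypothesis on $R_{xy}'$. Since $R_{xy}'$ is irreducible and the conics $C$ with $\lin{C} \subset X$ form a proper closed subset $B \subsetneq R_{xy}'$, the open set $R_{xy}' \setminus B$ is dense. As $\pi \colon \sU_{R_{xy}'} \to R_{xy}'$ is a family of irreducible conics over an irreducible base, $\sU_{R_{xy}'}$ is irreducible and $\ev(\pi^{-1}(R_{xy}' \setminus B))$ is dense in $\Loc(R_{xy}')$. Thus a general point $z \in \Loc(R_{xy}')$ lies on some conic $C$ with $\lin{C} \not\subset X$; since $z \notin \{x,y\}$, the three distinct points $x,y,z$ of the smooth conic $C$ are noncollinear, so $\lin{xyz} = \lin{C} \not\subset X$. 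By the previous paragraph the fiber over such $z$ is finite, whence $\ev$ is generically finite.

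Finally, the dimension formula is immediate: the projection $\pi$ has $1$-dimensional fibers (each fiber is a conic), so $\dim \sU_{R_{xy}'} = \dim(R_{xy}') + 1$, and generic finiteness of $\ev$ gives $\dim\Loc(R_{xy}') = \dim \sU_{R_{xy}'} = \dim(R_{xy}') + 1$. The step I expect to require the most care is the transfer of genericity in the third paragraph: one must ensure that a \emph{general} point of $\Loc(R_{xy}')$ is genuinely covered by a conic with $\lin{C} \not\subset X$, rather than only by conics whose planes lie in $X$, and this is precisely where the defining hypothesis that $\lin{C} \not\subset X$ for general $C \in R_{xy}'$ enters.
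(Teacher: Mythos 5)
Your proof is correct and takes essentially the same route as the paper's: both hinge on the observation that any conic through $x$, $y$, and a third non-collinear point $z$ must lie in the plane $\lin{xyz}$, hence coincides with one of the finitely many components of the plane curve $\lin{xyz}\cap X$ whenever that plane is not contained in $X$, with the hypothesis on $R_{xy}'$ used exactly as you say to guarantee a general point of $\Loc(R_{xy}')$ is covered by a conic whose plane is not in $X$. Your version is just a more detailed reorganization (contrapositive first, then generic finiteness) of the paper's compressed argument.
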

\begin{proof}
  If $\dim R_{xy}' = 0$, then the assertion follows immediately.
  Assume $\dim R_{xy}' \geq 1$. Then $\dim \Loc(R_{xy}') \geq 2$.
  Take $C \in R_{xy}'$ and $z \in C$ such that $M := \lin{xyz} = \lin{C} \not\subset X$.
  Then $M \cap X$ is a union of finitely many curves.
  Since any conic $\tilde C \in \pi(\ev^{-1}(z) \cap \sU_{R_{xy}'})$ satisfies $\tilde C \subset M$,
  it coincides with a component of $M \cap X$.
  Hence the fiber $\ev^{-1}(z) \cap \sU_{R_{xy}'}$ must be a finite set.
\end{proof}

We set $Y := \Loc(R) \subset X$, the locus swept out by conics $C \in R$,
which is non-linear since $\lin{C} \not\subset X$ for general $C$.
Let us consider the projection
\begin{equation}\label{eq:u2}
  \ev^{(2)}: \sUU \simeq \set*{(C, x, y) \in R \times Y \times Y}{x,y \in C} \rightarrow Y \times Y
\end{equation}
whose fiber at $(x,y) \in Y \times Y$
is isomorphic to $R_{xy}$.
Considering $\sUU \rightarrow R$, we have $\dim \sU \times_{R} \sU = r+2$.
Note that $\ev^{(2)}$ is dominant if and only if
$\Loc(R_x) = Y$ holds for general $x \in Y$.

\begin{rem}\label{thm:Yn-3}
  Assume that $\dim R$ is greater than the expected dimension.
  Then the locus $Y$ is much smaller than $X$. More precisely,
  by a result of R.~Beheshti \cite[Theorem 3.2(b)]{Be2},
  it holds that $\dim Y \leq n-3$.

  We immediately have $\dim Y \leq n-2$;
  this is because if $X = Y = \Loc(R)$ (i.e., $\dim Y = n-1$) in characteristic zero,
  then $R$ has a free curve $C$ and then $R$ must have expected dimension.
  Beheshti's result gives the inequity which is sharper than this.
\end{rem}

\begin{lem}\label{thm:6_n-3}
  If $d \leq 6$ and $r = \dim R$ is greater than the expected dimension,
  then $(d, \dim Y) = (6, n-3)$.
\end{lem}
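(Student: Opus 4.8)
The plan is to reduce the whole statement to a single numerical inequality, namely $3\dim Y \geq r + 3$, and then to play it off against Beheshti's bound $\dim Y \leq n-3$ (\autoref{thm:Yn-3}) together with the hypothesis $r > 3n-2d-2$. Throughout write $m := \dim Y$ and $m' := \dim \Loc(R_x)$ for general $x \in Y$; since $\Loc(R_x) \subseteq Y$ we get the inequality $m' \leq m$ for free.

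To obtain the key inequality I would run a dimension count on the two-pointed family. Recall $\dim \sUU = r+2$, and consider the projection $\ev^{(2)}$ of \autoref{eq:u2}, whose fibre over $(x,y)$ is $R_{xy}$. Let $Z \subseteq Y \times Y$ be the closure of its image. The first projection $Z \to Y$ is dominant (because $\ev \colon \sU \to Y$ is), and its general fibre is $\Loc(R_x)$, of dimension $m'$; hence $\dim Z = m + m'$, and the general fibre of $\ev^{(2)}$ has dimension $\dim R_{xy} = r + 2 - m - m'$.

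Now fix a general pair $(x,y) \in Z$ and choose an irreducible component $R_{xy}'$ of $R_{xy}$ of this maximal dimension. Applying \autoref{thm:URxy-Loc-inj} gives $\dim \Loc(R_{xy}') = \dim R_{xy}' + 1 = r + 3 - m - m'$, while $R_{xy}' \subseteq R_x$ forces $\Loc(R_{xy}') \subseteq \Loc(R_x)$, so $r + 3 - m - m' \leq m'$. Combined with $m' \leq m$ this yields $r + 3 \leq 3m$, as wanted. The delicate point, and the main obstacle, is checking that this maximal component $R_{xy}'$ really satisfies the hypothesis of \autoref{thm:URxy-Loc-inj}, that is, $\lin{C} \not\subset X$ for general $C \in R_{xy}'$; I would handle this by a genericity argument, observing that the bad locus $\set*{C \in R}{\lin{C}\subset X}$ is a proper closed subset of the irreducible $R$, so its preimage in $\sUU$ is too small to fill up the general fibre of $\ev^{(2)}$, whence the maximal component meeting the complement survives.

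Finally I would extract the conclusion purely numerically. From $3m \geq r+3 > 3n-2d+1$ and $3m \leq 3n - 9$ (i.e.\ $m \leq n-3$) one gets $3n - 2d + 1 < 3n - 9$, hence $2d > 10$ and $d \geq 6$; together with $d \leq 6$ this forces $d = 6$. Substituting $d=6$ leaves $3n - 11 < 3m \leq 3n - 9$, and since $3m$ is divisible by $3$ the only possibility in this range is $3m = 3n-9$, i.e.\ $\dim Y = n-3$. Thus $(d, \dim Y) = (6, n-3)$.
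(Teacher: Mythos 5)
Your proof is correct and takes essentially the same route as the paper: a dimension count on $\ev^{(2)}\colon \sUU \rightarrow Y \times Y$ combined with \autoref{thm:URxy-Loc-inj} yields the key inequality $r+3 \leq 3\dim Y$, which is then played off against Beheshti's bound $\dim Y \leq n-3$ (\autoref{thm:Yn-3}) and $r \geq 3n-2d-1$ exactly as in the paper. Your intermediate refinement via $m' = \dim \Loc(R_x)$ (later weakened back to $3m$) and your explicit treatment of the locus $\set*{C \in R}{\lin{C} \subset X}$ are harmless elaborations of steps the paper handles implicitly.
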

\begin{proof}
  We have $\dim Y \leq n-3$ due to Beheshti's result as we saw in \autoref{thm:Yn-3}.
  Let $(C, x, y) \in \sUU$ be general.
  Since $\Loc(R_{xy}) \subset Y$,
  it follows from
  the morphism \autoref{eq:u2} and \autoref{thm:URxy-Loc-inj}
  that
  $(r+2-2\dim(Y)) +1 \leq \dim(Y)$.
  Hence $r+3-3\dim(Y) \leq 0$.
  By assumption, $r \geq 3n-2d-1$. Thus
  \[
  3n-2d+2-3\dim(Y) \leq 0.
  \]
  Since $\dim Y \leq n-3$, we have $11-2d \leq 0$; hence $d = 6$.
  Therefore $n-10/3 \leq \dim(Y)$; hence $\dim(Y) = n-3$.
\end{proof}

By the above lemma, let us study the case $(d, \dim Y) = (6, n-3)$,
and assume that $r := \dim R$ is greater than the expected dimension, that is to say,
\begin{equation}\label{eq:r-geq-exdim}
  r \geq 3n - 13.
\end{equation}

\begin{lem}\label{thm:u2surj}
  $\ev^{(2)}$ is dominant. Therefore
  $\Loc(R_x) = Y$ for general $x \in Y$.
\end{lem}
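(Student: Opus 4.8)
The plan is to prove the dominance of $\ev^{(2)}$ directly, since the equivalence with $\Loc(R_x)=Y$ for general $x$ was already observed after \autoref{eq:u2}. Set $W := \overline{\ev^{(2)}(\sUU)} \subseteq Y\times Y$. I would show that $\dim W \geq 2n-6 = \dim(Y\times Y)$; as $Y\times Y$ is irreducible this forces $W = Y\times Y$, i.e. dominance. Note first that $\sUU$ is irreducible, being fibred over the irreducible $R$ with irreducible fibres $C\times C$, so $W$ is irreducible too, and both projections $W\to Y$ are dominant because $\ev\colon\sU\to Y$ is dominant.

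The computation refines the one in \autoref{thm:6_n-3}. I would take a general point $(C,x,y)\in\sUU$; then $x$ is general in $Y$, the pair $(x,y)$ is general in $W$, and $C$ is a general point of the fibre $R_{xy}=(\ev^{(2)})^{-1}(x,y)$, whose dimension is $\dim\sUU-\dim W=(r+2)-\dim W$. Since $\lin{C}\not\subset X$ holds on a dense open subset of $R$, it holds for this general $C$, so the component $R_{xy}'$ of $R_{xy}$ through $C$ satisfies the hypothesis of \autoref{thm:URxy-Loc-inj}, giving
\[
\dim\Loc(R_{xy}')=\dim R_{xy}'+1=r+3-\dim W .
\]

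The key new input, sharper than the containment $\Loc(R_{xy}')\subseteq Y$ used in \autoref{thm:6_n-3}, is
\[
\Loc(R_{xy}')\subseteq\Loc(R_x),
\]
which holds because every conic through $x$ and $y$ passes through $x$. As the first projection $W\to Y$ is dominant, its general fibre has dimension $\dim W-(n-3)$, whence $\dim\Loc(R_x)=\dim W-(n-3)$. Combining the two displays yields $r+3-\dim W\leq\dim W-(n-3)$, that is $2\dim W\geq r+n\geq 4n-13$ by \autoref{eq:r-geq-exdim}. Hence $\dim W\geq 2n-6$, and the dominance of $\ev^{(2)}$ follows; the statement $\Loc(R_x)=Y$ for general $x$ is then immediate.

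The conceptual crux, and exactly what upgrades the bound of \autoref{thm:6_n-3}, is the sharper containment $\Loc(R_{xy}')\subseteq\Loc(R_x)$ together with $\dim\Loc(R_x)=\dim W-(n-3)$; the rest is a single dimension count. The main technical point to check is the transfer of the genericity condition $\lin{C}\not\subset X$ from $R$ to $R_{xy}'$: since a general $C\in\sUU$ is a general point of $R_{xy}'$ and $\lin{C}\subset X$ is a closed condition, it fails on a dense open subset of $R_{xy}'$, so \autoref{thm:URxy-Loc-inj} genuinely applies. Identifying the general fibre of the first projection $W\to Y$ with $\overline{\Loc(R_x)}$ is routine once the irreducibility of $\sUU$ and the dominance of the projections are in place.
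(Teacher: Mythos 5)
Your proof is correct and is essentially the paper's own argument: both rest on the fibre-dimension count $\dim R_{xy} = (r+2) - \dim W$ for $W := \overline{\im(\ev^{(2)})}$, on \autoref{thm:URxy-Loc-inj} applied to the component $R_{xy}'$ through a general conic, and on identifying the general fibre of the projection $W \rightarrow Y$ with $\Loc(R_x)$ (the remark following \autoref{eq:u2}). The only difference is organizational: the paper argues by contradiction---if $\ev^{(2)}$ were not dominant, then $\Loc(R_{xy}) \subsetneq Y$ would give $\dim R_{xy} \leq n-5$, contradicting $\dim R_{xy} + \dim W = r+2 \geq 3n-11$---whereas you run the same count directly, finishing with the integrality step $2\dim W \geq r+n \geq 4n-13$, hence $\dim W \geq 2n-6 = \dim (Y \times Y)$.
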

\begin{proof}
  For general $(x,y) \in \im(\ev^{(2)})$,
  we have
  $\dim R_{xy} = (r+2)-\dim(\im(\ev^{(2)}))$,
  which implies
  $\dim R_{xy} +\dim(\im(\ev^{(2)})) = (r+2) \geq 3n-11$.

  Suppose that $\ev^{(2)}$ is not dominant, that is,
  $\dim(\im(\ev^{(2)})) < 2(n-3)$.
  Then
  $\Loc(R_{xy}) \neq Y$, which implies
  $\dim R_{xy} + 1 < \dim Y = n-3$ because of \autoref{thm:URxy-Loc-inj}.
  Then $\dim R_{xy} +\dim(\im(\ev^{(2)})) \leq (n-5)+(2(n-3)-1) = 3n-12$, a contradiction.
\end{proof}

Note that for any $x,y \in Y$
\begin{equation}\label{eq:dimRxy}
  \dim R_{xy} \geq r+2-2\dim(Y) \geq n-5.
\end{equation}

Hereafter we will use several projective techniques
in order to study $Y$.

\begin{rem}\label{thm:gmap}
  We sometimes consider the \emph{Gauss map} of a variety $Z \subset \Pn$,
  which is a rational map
  \[
  \gamma_Z : Z \dashrightarrow \Gr(\dim Z, \Pn),
  \]
  sending a smooth point $x \in Z$ to the embedded tangent space $\TT_xZ \subset \Pn$
  at $x$.
  A general fiber of $\gamma_Z$ is a linear variety of $\Pn$ in characteristic zero (in particular, irreducible).
  The map $\gamma_Z$ is a finite morphism if $Z$ is smooth.
  See \cite[I, \textsection{}2]{Za}.
\end{rem}

We write $\Pv = \Gr(n-1, \Pn)$, the space of hyperplanes of $\Pn$. For a linear subvariety $A \subset \Pn$, we denote by $A^* \subset \Pv$ the set of hyperplanes containing $A$. In addition, for a subset $B \subset \Pn$, we set $\Cone_{A}(B) := \overline{\bigcup_{x \in B} \lin{A, x}}\subset  \Pn$, the cone of $B$ with vertex $A$.
\\

Let us consider the linear variety $\lin{Y} \subset \Pn$ spanned by the locus $Y \subset X$.
The following proposition states
$\lin{Y}$ cannot be so small in $\Pn$.

\begin{prop}\label{thm:linY-n-1-pl}
  Assume \as.
  Then
  $\lin{Y}$ is of dimension $\geq n-1$.
\end{prop}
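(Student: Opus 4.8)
The plan is to bound $\dim\lin{Y}$ from below in two stages: an elementary dimension count already gives $\dim\lin{Y}\geq n-2$, and essentially all of the work goes into excluding the borderline value $\dim\lin{Y}=n-2$. First I would record that $\lin{Y}\not\subset X$: otherwise every conic $C\in R$ would satisfy $\lin{C}\subset\lin{Y}\subset X$, contradicting the standing hypothesis~\autoref{eq:1}. Hence $\lin{Y}\cap X$ is a proper hypersurface of $\lin{Y}$, so $\dim(\lin{Y}\cap X)=\dim\lin{Y}-1$; as $Y\subseteq\lin{Y}\cap X$ is irreducible of dimension $n-3$, this yields $\dim\lin{Y}\geq n-2$.

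Now suppose for contradiction that $\dim\lin{Y}=n-2$, and set $\Lambda:=\lin{Y}\cong\PP^{n-2}$. Then $Y$ is a top-dimensional (divisorial) component of the degree-$6$ linear section $\Lambda\cap X$. Since $X$ is smooth, $\Lambda\cap X$ is singular precisely along $\set*{z}{\Lambda\subset\TT_zX}$, and I would first check that $Y$ is not contained in this locus, i.e.\ that $\Lambda\not\subset\TT_xX$ for general $x\in Y$, so that $\TT_xY=\TT_xX\cap\Lambda$ is a hyperplane of $\Lambda$ there; were this to fail, every tangent hyperplane of $X$ along $Y$ would contain $\lin{Y}$, which can be excluded directly. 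The key structural input from the conics is then the following: because $\Loc(R_x)=Y$ for general $x$ (\autoref{thm:u2surj}) while $Y$ is non-linear, a general conic $C\in R_x$ cannot satisfy $\lin{C}\subset\TT_xX$, for otherwise $\lin{C}\subset\TT_xX\cap\Lambda=\TT_xY$ for a dense family of $C$, forcing $Y\subseteq\TT_xY$ and hence $Y$ linear. Thus $\lin{C}\cap\TT_xX=\TT_xC$ is just the tangent line of $C$ at $x$.

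To close the argument I would examine the tangent directions $\TT_xC$, which sweep out a subvariety of the $\PP^{n-4}$ of directions tangent to $Y$ at $x$, itself sitting inside the $\PP^{n-2}$ of directions tangent to $X$ at $x$. Each such direction lies on the projective second fundamental form of $X$ at $x$ — the quadric cut out by the Hessian of a defining equation of $X$ — because a conic $C\subset X$ osculates $X$ to second order at $x$. Since the conics through $x$ cover the whole $(n-3)$-fold $Y$, I would argue that their tangent directions are non-degenerate in $\PP(\TT_xY)$, which would place the $(n-4)$-plane $\PP(\TT_xY)$ on a quadric of $\PP^{n-2}$; this is impossible for $n\geq 6$ once that quadric has sufficiently high rank, since the largest linear subspace of a nondegenerate quadric in $\PP^{n-2}$ has dimension strictly less than $n-4$. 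This final step is where I expect the real difficulty to lie: the conics might a priori share tangent directions and concentrate on a proper subquadric of $\PP(\TT_xY)$, and the second fundamental form of $X$ could degenerate all along the special locus $Y$ (the parabolic case), so one must rule both out. I anticipate that doing so requires combining the smoothness of $X$, the restriction $d\leq 6$, and the two-point abundance estimate $\dim R_{xy}\geq n-5$ of~\autoref{eq:dimRxy}, implemented through the Gauss-map and cone constructions prepared above.
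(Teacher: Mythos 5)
Your first stage is correct and matches the paper's opening observation: since $Y$ is non-linear (equivalently, since $\lin{Y}\not\subset X$ by \autoref{eq:1}), one gets $\dim\lin{Y}\geq n-2$ at once, so everything rests on excluding $\dim\lin{Y}=n-2$. Your two intermediate reductions are also sound: $\Lambda:=\lin{Y}\not\subset\TT_xX$ for general $x\in Y$ (by finiteness of the Gauss map of the smooth $X$), and a general $C\in R_x$ has $\lin{C}\not\subset\TT_xX$ (else $Y=\Loc(R_x)$ would lie in the $(n-3)$-plane $\Lambda\cap\TT_xX$ and be linear).

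The final step, however, rests on a false premise, and it is precisely the step carrying all the weight. It is not true that the tangent direction at $x$ of a conic $C\subset X$ lies on the Hessian (second fundamental form) quadric of $X$ at $x$. Writing $C$ affinely as $t\mapsto x+tv+t^2w+\cdots$ with $v,w$ spanning the plane of $C$, the condition $f|_C\equiv 0$ gives at order two $\operatorname{Hess}_xf(v,v)=-2\,df_x(w)$; hence $II_x(v,v)=0$ if and only if $w\in T_xX$, i.e.\ if and only if $\lin{C}\subset\TT_xX$. Lines are asymptotic directions; conics are not, unless their plane lies in the embedded tangent space. But your own previous step shows that a \emph{general} $C\in R_x$ satisfies $\lin{C}\not\subset\TT_xX$, so the tangent directions you propose to use lie \emph{off} the quadric $II_x=0$, and the intended contradiction (an $(n-4)$-plane inside a quadric of $\PP^{n-2}$) never gets started. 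The conics that do give asymptotic directions are exactly those with $x\in C\subset\TT_xX$ — the family $\Rs_x$ of the paper — but this is a strictly smaller family, and the paper analyzes it only \emph{after} the present proposition (in \autoref{thm:Px-const} and \autoref{claim:quad}, where \autoref{thm:linY-n-1-pl} itself is an input), so one cannot invoke that structure here without circularity. You also concede that the non-degeneracy of the swept-out directions and the possible degeneration of $II_x$ along the special locus $Y$ are unresolved; so even granting the premise, the argument is incomplete. The paper's actual proof goes a different way: assuming $\lin{Y}=\PP^{n-2}$, it shows $Y=\lin{Y}\cap X$ is a degree-$6$ hypersurface with $\dim(\Sing Y)\leq 1$ (Gauss-map finiteness), then shows a general conic of $R$ must meet $\Sing Y$ — otherwise it would be free in $Y$ and $R\subset R_2(Y)$ would have the expected dimension $3(n-2)-2d-2$, contradicting \autoref{eq:r-geq-exdim}; this is where $d=6$ and the excess-dimension hypothesis really enter — and finally derives a contradiction, by a dimension count using $\dim R_{xy}\geq n-5\geq 2$ when $n>6$, and by a four-step analysis of $3$-planes $M\subset\lin{Y}=\PP^4$ and the Gauss map of $Y^M=\Loc(R^M)$ when $n=6$.
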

\begin{proof}
  Since $Y$ is non-linear,
  we immediately have $\dim \lin{Y} > n-3$.
  Now assume that $\lin{Y}$ is an $(n-2)$-plane.
  We need to show two claims.

  \begin{claim}\label{sec:locus-swept-out-2}
    It holds that $Y = \lin{Y} \cap X$
    and $\dim (\Sing Y) \leq 1$.
    In particular, $Y \subset \lin{Y} = \PP^{n-2}$
    is a hypersurface whose degree is equal to $\deg X = d = 6$.
  \end{claim}
  \begin{proof}
    For $x \in \lin{Y} \cap X$, it holds that
    $x \in \Sing (\lin{Y} \cap X)$ if and only if $\lin{Y} \subset \TT_xX$.
    It means that
    $\gamma_X(\Sing (\lin{Y} \cap X)) \subset \lin{Y}^*$
    for the Gauss map $\gamma_X: X \rightarrow \Pv[n]$,
    where $\lin{Y}^* \subset \Pv$ is the set of hyperplanes containing $\lin{Y}$.
    Since $X$ is smooth, $\gamma_X$ is a finite morphism.
    Since $\dim \lin{Y}^* = 1$,
    we have $\dim (\Sing (\lin{Y} \cap X)) \leq 1$.
    If there exists an irreducible component
    $Y' \subset \lin{Y} \cap X \subset \lin{Y} = \PP^{n-2}$ such that $Y' \neq Y$,
    then we have $\dim (Y' \cap Y) \geq n-4 \geq 2$, which is a contradiction
    since $Y' \cap Y \subset \Sing (\lin{Y} \cap X)$. Thus $\lin{Y} \cap X = Y$.
  \end{proof}

  \begin{claim}\label{thm:CnY-neq-0}
    A general $C \in R$ satisfies $C \cap \Sing Y \neq \emptyset$.
  \end{claim}
  \begin{proof}
    Suppose that a general conic $C \in R$ satisfies $C \cap \Sing Y = \emptyset$.
    Since $Y = \Loc(R)$ and the characteristic is zero, $C$ is free in $Y$.
    Then
    $R_2(Y)$ is smooth at $C$ and has the expected dimension $3(n-2) - 2\deg Y - 2$.
    This contradicts that $R \subset R_2(Y)$ is of dimension $> 3n-2d-2$,
    where $\deg Y = \deg X = d$ because of \autoref{sec:locus-swept-out-2}.
  \end{proof}

  From \autoref{thm:CnY-neq-0},
  we may indeed assume
  \begin{equation}\label{eq:CnSempty}
    C \cap S \neq \emptyset
  \end{equation}
  for an irreducible component $S \subset \Sing Y$.
  Note that $\dim S \leq 1$.
  \\

  First we consider the case $n > 6$. 
  Let $C_0 \in R$ be a general conic such that $\lin{C_0} \not\subset X$,
  and take $x,y \in C_0 \setminus S$ be general.
  From \autoref{eq:dimRxy}, we have
  $\dim R_{xy} \geq n-5 \geq 2$.
  Let $R_{xy}' \subset R_{xy}$ be an irreducible component containing $C_0$,
  and take $(R_{xy}')\spcirc \subset R_{xy}'$ to be
  the set of $C$ satisfying $\lin{C} \not\subset X$.
  Then, for any $C \in (R_{xy}')\spcirc$ and $s \in C \cap S$,
  we have $\lin{xys} = \lin{C} \not\subset X$.
  From \autoref{thm:URxy-Loc-inj},
  for the morphism
  \[
  \widetilde{\ev} := \ev|_{\sU_{(R_{xy}')\spcirc}}: \sU_{(R_{xy}')\spcirc} \rightarrow \Loc(R_{xy}'),
  \]
  the preimage $\widetilde{\ev}^{-1}(S)$ is of dimension $\leq 1$.
  Since $\dim R_{xy}' \geq 2$, we have $\pi(\widetilde{\ev}^{-1}(S)) \neq R_{xy}'$,
  which means that $C \cap S = \emptyset$ for general $C \in R_{xy}'$,
  a contradiction.
  \\

  Next we consider the case $n = 6$,
  and complete the proof in the following four steps.
  Note that $\lin{Y} = \PP^4 \subset \PP^6$.
  \\

  \begin{inparaenum}[\noindent\itshape{}Step\,1.]
  \item 
    We show that $S \not\subset \lin{C_0}$
    for general $C_0 \in R$, 
    and also show that
    $S \not\subset M$
    for a general $3$-plane $M \subset \lin{Y}$ containing $C_0$.

    Suppose $S \subset \lin{C_0}$ for general $C_0 \in R$,
    and take $x, y \in Y$ be general points.
    We can assume $y \notin \Cone_x(S)$.   Since $\dim R_{xy} \geq n-5 \geq 1$,
    taking general $C, C' \in R_{xy}$ with $C \neq C'$, we have
    $S \subset \lin{C} \cap \lin{C'} = \lin{xy}$,
    a contradiction.
    
    If $S \subset M$ for a general $3$-plane $M$ containing $C_0$,
    then we can also take another general $\tilde M \neq M$,
    and then $\lin{C_0} = M \cap \tilde M \supset S$, a contradiction.
    \\

  \item
    We consider ${\lin{Y}}\spcheck := \Gr(3, \lin{Y})$,
    the set of $3$-planes in $\lin{Y} = \PP^4$.
    Let
    \[
    W = \set*{(C,M) \in R \times {\lin{Y}}\spcheck}{C \subset M},
    \]
    which is a $\PP^1$-bundle over $R$; in particular,
    $\dim W \geq 6$.
    Let $\pr_2: W \rightarrow {\lin{Y}}\spcheck$ be the projection to the second factor.

    For general $(C_0, M) \in W$,
    take $R^M$ to be an irreducible component of $R \cap R_2(M)$ containing $C_0$.
    We may assume that a general conic $C \in R^M$ satisfies $\lin{C} \not\subset X$.
    Since $R \cap R_2(M) \simeq \pr_2^{-1}(M)$,
    we can assume $\dim R^M \geq 6-\dim(\pr_2(W))$.
    We set the surface
    \[
    Y^M := \Loc(R^M) \subset Y \cap M.
    \]
    
  \item 
    Assume $\dim \pr_2(W) \leq 3$.
    Then we have
    $\dim R^M \geq 3$,
    which implies that
    $R^M \rightarrow \Gr(2, M) = \Pv[3] : C \mapsto \lin{C}$
    is dominant.
    From Step 1,
    $S \cap M$ is a set of finite points. Thus
    $L \cap S = \emptyset$ for a general $2$-plane $L \subset M$.
    Taking a general $C \in R^M$ such that $\lin{C} = L$,
    we find that $C \cap S = \emptyset$,
    which contradicts the condition \autoref{eq:CnSempty}.
    \\

  \item 
    Assume $\dim \pr_2(W) = 4$, that is, $\pr_2(W) = {\lin{Y}}\spcheck$.
    For general $(C_0, M) \in W$, since $M$ is general in ${\lin{Y}}\spcheck$,
    $Y \cap M$ is irreducible.
    Then $Y^M = Y \cap M \subset M = \PP^{3}$, which is a surface
    whose degree is equal to $\deg Y = 6$.

    Since $S \cap M$ is a finite set,
    we may assume that there exists $s \in S \cap M$
    such that $s \in C$ for general $C \in R^M$.
    This implies that $R^M \subset R_s$.
    Considering $\sU_{R^M} \rightarrow Y^M$,
    we find that $\dim (R^M \cap R_x) \geq 1$ for general $x \in Y^M$.
    Since $Y^M$ is surface,
    \[
    Y^M = \Loc(R^M \cap R_x) = \Loc(R^M \cap R_{xs}).
    \]

    For general $C \in R^M$,
    we show that $\lin{C} \cap Y^M$ is scheme-theoretically equal to $C$, as follows.
    Write $(\lin{C} \cap Y^M)_{red} = C \cup \bigcup_i E_i$ with the irreducible components $E_i$'s.
    Take a general $x \in C \setminus \bigcup_i E_i$
    and take a general $y \in E_1 \setminus (C \cup \lin{xs})$.
    Taking the closure
    \[
    \overline{R^M \cap R_{xs}} \subset \Hilb^{2t + 1} (X)
    \]
    and consider the surjective map $\sU_{\overline{R^M \cap R_{xs}}} \rightarrow Y^M$,
    we find $\tilde C \in \overline{R^M \cap R_{xs}}$
    such that $x,y,s \in \tilde C$.
    Then $\lin{\tilde C} = \lin{xys} = \lin{C}$,
    which implies that $\tilde C \subset Y^M \cap \lin{C}$.
    By the choice of $x$, it follows $C \subset \tilde C$.
    Then $C = \tilde C$, which implies $y \in C$, a contradiction.

    Thus $(\lin{C} \cap Y^M)_{red} = C$.
    Suppose that $\lin{C} \cap Y^M$ is non-reduced,
    which means that $C$ is a contact locus on $Y^M$ of $\lin{C}$,
    i.e., $\gamma(C) = \lin{C} \in \Gr(2, M)$
    for the Gauss map
    \[
    \gamma = \gamma_{Y^M}: Y^M \dashrightarrow \Gr(2, M)
    \]
    sending $x \mapsto \TT_xY^M$.
    Then $\dim \gamma(Y^M) = 1$.
    Since $Y^M = \Loc(R^M)$,
    $\lin{C} \in \gamma(Y^M)$ is a general point if so is $C \in R^M$.
    As in \autoref{thm:gmap},
    the general fiber $\gamma^{-1}(\lin{C})$ is a linear variety,
    which contradicts $C \subset \gamma^{-1}(\lin{C})$.
    
    Thus $\lin{C} \cap Y^M = C$ scheme-theoretically.
    This contradicts $\deg Y^M = 6$.
  \end{inparaenum}
\end{proof}

\section{Special point of a conic: the embedded tangent space at the point containing the conic}
\label{sec:special-point-conic}

We use the notations of \autoref{sec:locus-swept-out}.
From \autoref{thm:6_n-3}, we may assume \as.

\begin{lem}
  Let $(C, x) \in \sU$. Then the following holds.
  \begin{enumerate}
  \item 
    $C \not\subset \TT_xX$ if and only if
    $\TT_xX \cap C = \set{x}$

  \item 
    $C \subset \TT_xX$
    if and only if one of the following conditions holds:
    \begin{inparaenum}[\normalfont (i)]
    \item $\lin{C} \subset X$;
    \item $\lin{C} \cap X$ is non-reduced along $C$;
    \item $x \in C \cap E$ for some irreducible component $E \neq C$ of $\lin{C} \cap X$.
    \end{inparaenum}
  \end{enumerate}
\end{lem}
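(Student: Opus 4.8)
The plan is to pass to the plane $\lin{C}=\PP^2$ and read both statements off a single derivative computation. Fix a defining equation $F$ of $X$ and set $f:=F|_{\lin{C}}$, a form of degree $d$ on $\lin{C}$. Since $\lin{C}$ is linear, the chain rule gives $df_x=(dF_x)|_{\lin{C}}$, so inside $\lin{C}$ the section $\TT_xX\cap\lin{C}$ is cut out by the linear form $df_x$; and as $\lin{C}$ is spanned by $C$, we have $C\subset\TT_xX$ iff $\lin{C}\subset\TT_xX$ iff $df_x=0$ on $\lin{C}$. Whenever $\lin{C}\not\subset X$, let $q$ be the quadratic form defining the smooth conic $C$; then $q$ is irreducible and $C\subset X$, so $q\mid f$, say $f=q\,f_1$ with $\deg f_1=d-2$. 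For $x\in C$ we have $q(x)=0$, whence the governing identity $df_x=f_1(x)\,dq_x$, where $dq_x\neq0$ because $C$ is smooth.

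For (1), the implication from $\TT_xX\cap C=\set{x}$ to $C\not\subset\TT_xX$ is trivial. Conversely, if $C\not\subset\TT_xX$ then $\lin{C}\not\subset X$ (otherwise $\lin{C}\subset X$ would give $\lin{C}\subset\TT_xX$), so the factorization applies and $df_x\neq0$ forces $f_1(x)\neq0$; hence $\TT_xX\cap\lin{C}$ is the line $\set{dq_x=0}=\TT_xC$, the tangent line to $C$ at $x$. A tangent line meets a smooth conic only at its point of tangency, so $\TT_xX\cap C=\TT_xC\cap C=\set{x}$.

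For (2), if $\lin{C}\subset X$ (case (i)) then $f=0$ and $C\subset\TT_xX$ trivially; otherwise $C\subset\TT_xX\iff f_1(x)=0$, and it remains only to match this with (ii)--(iii). If $q\mid f_1$ then $q^2\mid f$, i.e.\ $\lin{C}\cap X$ is non-reduced along $C$ (case (ii)), and $f_1$ then vanishes on all of $C$; if $q\nmid f_1$, then $E:=\set{f_1=0}$ has no component equal to $C$, its components are exactly the irreducible components of $\lin{C}\cap X$ other than $C$, and $f_1(x)=0\iff x\in C\cap E$ (case (iii)). Conversely each of (i)--(iii) forces $f_1(x)=0$ (or $f=0$) and hence $C\subset\TT_xX$. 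The argument is elementary; the only points needing care are that $dq_x\neq0$---where smoothness of the conic enters, ensuring $\TT_xX\cap\lin{C}$ is exactly the tangent line $\TT_xC$ rather than some other line through $x$---and that the dichotomy $q\mid f_1$ versus $q\nmid f_1$ is exhaustive and separates (ii) from (iii) once case (i) is set aside.
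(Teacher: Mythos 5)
Your proof is correct and takes essentially the same route as the paper's: both pass to the plane curve $\lin{C}\cap X\subset\lin{C}$ and rest on the dichotomy between non-reducedness along $C$ and residual components through $x$, your identity $df_x=f_1(x)\,dq_x$ being the explicit coordinate form of the paper's synthetic observations that $\TT_xC\subset\TT_xX\cap\lin{C}$ (for part (1)) and that $C\subset\TT_xX$ forces $\lin{C}\cap X$ to be singular at $x$ (for part (2)). If anything, your write-up is more complete, since the paper leaves the converse implications in (2) implicit.
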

\begin{proof}
  \begin{inparaenum}
  \item
    Since $X \subset \Pn$ is a hypersurface and $C$ is a smooth conic,
    if $C \not\subset \TT_xX$, then
    we have $\TT_xX \cap C
    \subset \TT_xX \cap (\lin{C} \cap C) = \TT_xC \cap C = \set{x}$.

  \item 
    It is sufficient to consider the case when $\lin{C} \not\subset X$.
    If $C \subset \TT_xX$, then $\lin{C} \subset \TT_xX$,
    and then $\lin{C} \cap X$ is singular at $x$. This means that (ii) or (iii) holds.
  \end{inparaenum}
\end{proof}
For a smooth conic $C \subset X$, we always have a point $x \in C$
satisfying $C \subset \TT_xX$, as follows.
Since $\deg(X) = 6$, if (i) and (ii) does not hold, then we have a curve $E \neq C$ in $\lin{C} \cap X$,
and have a point $x \in C \cap E \subset \lin{C} = \PP^2$ as in (iii).

\begin{defn}
  We set $\sUs \subset \sU$ to be an irreducible component of
  $\set*{(C, x) \in \sU}{C \subset \TT_xX}$
  such that $\sUs \rightarrow R$ is dominant,
  and set $R_x^{\star}:= \pi(\ev^{-1}(x) \cap \sUs)$,
  which consists of conics $C \subset X$ such that $x \in C \subset \TT_xX$.
  Note that $\Loc(\Rs_x) \subset \TT_xX$.
\end{defn}

\begin{lem}\label{thm:sUs-neq-sU}
  $\sUs \neq \sU$.
\end{lem}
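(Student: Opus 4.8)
The plan is to argue by contradiction: I assume $\sUs = \sU$ and show that this forces the tangent hyperplane $\TT_xX$ to be constant as $x$ ranges over the positive-dimensional locus $Y$, so that the Gauss map $\gamma_X$ contracts $Y$ to a point. Since $X$ is smooth, $\gamma_X$ is a finite morphism (\autoref{thm:gmap}) and cannot contract a positive-dimensional subvariety, which yields the contradiction.

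\textbf{Rigidity of the hypothesis.} First I would exploit that the equality $\sUs = \sU$ leaves no room for genericity. By definition $\sUs$ is contained in the tangency locus $\set*{(C,x) \in \sU}{C \subset \TT_xX}$; if it equals all of $\sU$, then this locus is $\sU$ itself, i.e. \emph{every} pair $(C,x) \in \sU$ satisfies $C \subset \TT_xX$. Fixing a general point $x \in Y$, it follows that every conic $C \in R_x$ (for which indeed $(C,x) \in \sU$) lies in the hyperplane $\TT_xX$. Because $\TT_xX$ is closed, the union $\bigcup_{C \in R_x} C$ and hence its closure $\Loc(R_x)$ are contained in $\TT_xX$.

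\textbf{Feeding in the earlier results.} Next I would invoke \autoref{thm:u2surj}, which gives $\Loc(R_x) = Y$ for general $x \in Y$. Combined with the previous step this yields $Y \subset \TT_xX$, and therefore $\lin{Y} \subset \TT_xX$, for general $x \in Y$. By \autoref{thm:linY-n-1-pl} we have $\dim \lin{Y} \geq n-1$. The case $\lin{Y} = \Pn$ is impossible, since $\TT_xX$ is a hyperplane; hence $\lin{Y}$ is itself a hyperplane, and the inclusion $\lin{Y} \subset \TT_xX$ of two $(n-1)$-planes forces $\TT_xX = \lin{Y}$ for general $x \in Y$. This says precisely that $\gamma_X$ takes the constant value $\lin{Y} \in \Pv$ on $Y$. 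As $\dim Y = n-3 \geq 3$ (recall $X$ is Fano of degree $6$, so $n \geq 6$), this contradicts the finiteness of $\gamma_X$, completing the argument.

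\textbf{Where the work lies.} I expect essentially no computation here; the argument is a short chain of already-established facts. The one step to get exactly right is the first translation: that the scheme equality $\sUs = \sU$ genuinely upgrades to the pointwise statement ``$C \subset \TT_xX$ for every $(C,x) \in \sU$''. (It may be worth noting in passing that, via the preceding lemma, this is the same as saying $\lin{C} \cap X$ is non-reduced along $C$ for general $C$, but the clean route above does not need that reformulation.) After this, the only bookkeeping is that $R_x$ is nonempty and dominates $Y$ for general $x$, which is exactly \autoref{thm:u2surj}, and that $\dim Y > 0$.
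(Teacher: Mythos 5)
Your proposal is correct and follows essentially the same route as the paper: assume $\sUs = \sU$, use \autoref{thm:u2surj} to get $Y = \Loc(R_x) \subset \TT_xX$ for general $x \in Y$, use \autoref{thm:linY-n-1-pl} to force $\TT_xX = \lin{Y}$, and contradict the finiteness of the Gauss map of the smooth hypersurface $X$ (\autoref{thm:gmap}). The only difference is that you spell out the rigidity step ($\sUs = \sU$ implies every pair $(C,x) \in \sU$ satisfies $C \subset \TT_xX$) and the dichotomy $\dim\lin{Y} \in \{n-1, n\}$, which the paper leaves implicit.
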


\begin{proof}
  We consider the Gauss map $\gamma: X \rightarrow \Pv[n]$.
  If $\sUs = \sU$, then it follows from \autoref{thm:u2surj}
  that $Y = \Loc(R_x) = \Loc(\Rs_x)\subset \TT_xX$ for general $x \in Y$,
  and then $\TT_xX = \lin{Y} \subset \Pn$
  because of \autoref{thm:linY-n-1-pl}.
  Then $\gamma(Y) = \lin{Y} \in \Pv[n]$,
  which contradicts that
  $\gamma$ is a finite morphism as we mentioned in \autoref{thm:gmap}.
\end{proof}

\begin{lem}\label{thm:Rxy-Rsx}
  Let $x \neq y \in Y$ satisfy $y \in \TT_xX$. Then $C \subset \TT_xX$ for any $C \in R_{xy}$.
\end{lem}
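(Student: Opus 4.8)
The plan is to obtain this as a direct consequence of the dichotomy recorded in part (1) of the first lemma of this section, which asserts that for any $(C, x) \in \sU$ one has $C \not\subset \TT_xX$ if and only if $\TT_xX \cap C = \set{x}$. So the only thing to prove is that, for a conic $C \in R_{xy}$, the intersection $\TT_xX \cap C$ is strictly larger than $\set{x}$; the characterization then leaves no option but $C \subset \TT_xX$.

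First I would fix $C \in R_{xy}$ and observe that, since $C$ passes through $x$, the pair $(C, x)$ lies in $\sU$, so that part (1) is applicable. Next I note that both $x$ and $y$ lie in $\TT_xX \cap C$: the point $x$ is in $\TT_xX$ tautologically and in $C$ by membership $C \in R_x$, while $y$ is in $\TT_xX$ by the standing hypothesis $y \in \TT_xX$ and in $C$ because $C \in R_{xy} \subset R_y$. As $x \neq y$, this shows $\TT_xX \cap C \supsetneq \set{x}$, and in particular $\TT_xX \cap C \neq \set{x}$; the characterization then forces $C \subset \TT_xX$.

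There is no genuine obstacle here, as the argument is a one-line application of the earlier dichotomy. The one point worth checking is that part (1) is invoked in its full generality, with no restriction such as $\lin{C} \not\subset X$: this matters because the conclusion must hold for \emph{every} $C \in R_{xy}$, including conics whose spanning plane happens to lie in $X$. Since part (1) is stated for all $(C,x) \in \sU$ without any genericity assumption, every member of $R_{xy}$ is covered, and the degenerate case $\lin{C} \subset X$ is in any event consistent with the conclusion, for then $\lin{C} \subset \TT_xX$ automatically and hence $C \subset \TT_xX$.
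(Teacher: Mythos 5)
Your proof is correct and is essentially the paper's own argument, repackaged as a citation: the paper's proof re-runs the tangent-line computation (if $C \not\subset \TT_xX$ then $\lin{C} \cap \TT_xX = \TT_xC$, which meets the smooth conic $C$ only at $x$, contradicting $y \in C \cap \TT_xX$), and that computation is precisely the content of part (1) of the preceding lemma that you invoke. Your observation that $x, y \in \TT_xX \cap C$ with $x \neq y$ rules out $\TT_xX \cap C = \set{x}$, so the stated equivalence forces $C \subset \TT_xX$ for every $C \in R_{xy}$, with no genericity caveat needed.
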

\begin{proof}
  Take $C \in R_{xy}$ and
  suppose $C \not\subset \TT_xX$.
  Since $\lin{C} \not\subset \TT_xX$,
  $\lin{C} \cap \TT_{x}X$ is the line passing through $x$ and $y$.
  On the other hand, we have $\lin{C} \cap \TT_{x}X = \TT_{x}C$,
  which does not intersect with any point of $C$ except $x$
  since $C$ is a smooth conic,
  a contradiction. Thus we have $C \subset \TT_{x}X$.
\end{proof}

By \autoref{thm:sUs-neq-sU},
we may assume that $\sUs \neq \sU$.
Then $\dim \sUs = \dim R = r$.
We take the projection
\begin{equation*}\label{eq:sUs-x-sU}
  \mu: \sUs \times_R \sU
  \simeq \set*{(C, x, y) \in R \times Y \times Y}{(C, x) \in \sUs, (C, y) \in \sU}
  \rightarrow \ev(\sUs) \times Y,
\end{equation*}
such that $\mu(C, x, y) = (x,y)$, where $\ev: \sU \rightarrow Y$ is the second projection.

Let $(C, x, y) \in \sUs \times_R \sU$ be general.
(Then $x,y \in C \subset \TT_xX$.)
We can take the unique irreducible component
${\Rs_x}' \subset \Rs_x$ 
containing $C$, and
take the unique irreducible component
$R_{xy}' \subset R_{xy}$ 
containing $C$.
(The uniqueness comes from the general choice of $C$.)

\begin{lem}\label{thm:Rxy-sub-Rsx}
  In the above setting, we have
  $R_{xy}' \subset {\Rs_x}'$.
\end{lem}
\begin{proof}
  It follows from \autoref{thm:Rxy-Rsx}.
\end{proof}

Moreover, we have the following key proposition,
where for the projection,
$\im(\mu) \rightarrow \ev(\sUs): (x,y) \mapsto x$,
we also consider the following fiber product
\[
\im(\mu) \times_{\ev(\sUs)} \im(\mu)
\simeq \set*{(x,y,z) \in \ev(\sUs) \times Y \times Y}{(x,y),(x,z) \in \im(\mu)}.
\]
Note that, for an element $(x,y,z)$ of the above set,
there exists conics $C_1, C_2 \subset \TT_xX$
such that $x,y \in C_1$ and $x,z \in C_2$.
The projection
$\im(\mu) \times_{\ev(\sUs)} \im(\mu) \rightarrow Y \times Y$ is dominant.
\begin{prop}\label{thm:Px-const}
  Assume \as.
  Then the following holds.
  \begin{enumerate}
  \item 

    $\Lsx = \Lxy$ and the dimension is $n-4$
    for general $(C, x, y) \in \sUs \times_R \sU$.

  \item 
    $\lin{\Loc({\Rs_x}')}$ is an $(n-3)$-plane of $\Pn$
    for general $(C, x) \in \sUs$.

  \item
    The projection
    \[
    \sUs \times_R \sU \times_R \sU \rightarrow \im(\mu) \times_{\ev(\sUs)} \im(\mu)
    \]
    defined by $(C, x,y,z) \mapsto (x,y,z)$
    is dominant.
    
  \item 
    Let $(C, x, y, z) \in \sUs \times_R \sU \times_R \sU$ be general
    (here, $x,y,z \in C \subset \TT_xX$).
    Then $\Lsx = \Loc(R_{yz}')$,
    where
    $R_{yz}' \subset R_{yz}$ 
    is the unique irreducible component
    containing $C$.

  \end{enumerate}
\end{prop}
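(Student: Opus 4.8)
The plan is to establish the four parts in order, since each builds on the previous ones, and to exploit the dimension bounds already recorded in \autoref{eq:r-geq-exdim}, \autoref{eq:dimRxy}, and the generic finiteness from \autoref{thm:URxy-Loc-inj}.

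For part (1), I would begin with the general element $(C,x,y)\in\sUs\times_R\sU$. From \autoref{thm:Rxy-sub-Rsx} we already have $R_{xy}'\subset{\Rs_x}'$, hence $\Loc(R_{xy}')\subset\Loc({\Rs_x}')$. The strategy is to show both loci have dimension $n-4$, which forces equality of the irreducible closed sets. The lower bound on $\dim\Loc(R_{xy}')$ comes from \autoref{thm:URxy-Loc-inj} together with the estimate $\dim R_{xy}\geq n-5$ from \autoref{eq:dimRxy}, giving $\dim\Loc(R_{xy}')\geq n-4$. For the upper bound on $\dim\Loc({\Rs_x}')$, I would count dimensions using $\dim\sUs=r\geq 3n-13$: the fiber of $\sUs\to\ev(\sUs)$ over $x$ is (up to the $\pi$-identification) ${\Rs_x}'$, and I would bound $\dim\ev(\sUs)\leq\dim Y=n-3$ to extract $\dim{\Rs_x}'$, then apply generic finiteness of $\sU_{{\Rs_x}'}\to\Loc({\Rs_x}')$ to conclude $\dim\Loc({\Rs_x}')\leq n-4$. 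Squeezing the chain $n-4\leq\dim\Loc(R_{xy}')\leq\dim\Loc({\Rs_x}')\leq n-4$ yields (1).

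Part (2) should follow from (1) by a spanning argument: $\Loc({\Rs_x}')\subset\TT_xX$, and $\TT_xX$ is a hyperplane $\PP^{n-1}$, so $\lin{\Loc({\Rs_x}')}$ is at most an $(n-1)$-plane; I would argue it has dimension exactly $n-3$ by showing $\Loc({\Rs_x}')$ is nondegenerate in an $(n-3)$-plane but cannot span more, using that its dimension is $n-4$ and that it is swept by conics through $x$ — the point $x$ plus the tangent directions of these conics pin down the linear span. Part (3) is a dominance statement that I expect to reduce to a general-position count: the projection $\sUs\times_R\sU\times_R\sU\to\im(\mu)\times_{\ev(\sUs)}\im(\mu)$ is dominant iff for general $(x,y,z)$ in the target there is a conic $C\subset\TT_xX$ through $x,y$ (resp.\ $x,z$) — but here I want a single $C\in{\Rs_x}'$ realizing the whole triple, so I would instead verify the fibers have the expected codimension by comparing $\dim(\sUs\times_R\sU\times_R\sU)=r+2$ against the dimension of the target computed from part (1).

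Part (4) is the crux and where I expect the main obstacle. Given the general $(C,x,y,z)$ with $x,y,z\in C\subset\TT_xX$, I want $\Lsx=\Loc(R_{yz}')$. The inclusion $\Loc(R_{yz}')\subset\Lsx$ is not automatic because $R_{yz}'$ consists of conics through $y,z$ but a priori not lying in $\TT_xX$; the key will be to invoke \autoref{thm:Rxy-Rsx} in a symmetric role, showing $y\in\TT_xX$ forces the relevant conics into $\TT_xX$, and then to play the three points against one another so that the tangent-space constraint propagates. Concretely, I would show $\Loc(R_{yz}')\subset\TT_xX$ by checking $C\subset\TT_xX$ pins the plane $\lin{C}=\lin{xyz}$ inside $\TT_xX$, then use that both sides are irreducible of dimension $n-4$ (the left by part (1), the right by the same generic-finiteness estimate applied to $R_{yz}'$) to upgrade inclusion to equality. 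The delicate point is ensuring the \emph{uniqueness} of the component $R_{yz}'$ containing $C$ matches up with ${\Rs_x}'$ under the generic choice — I expect this to require the dominance from part (3) precisely so that $(x,y,z)$ ranges over a dense set where all the components are the expected irreducible ones, and verifying this compatibility of components is the step most likely to need careful justification.
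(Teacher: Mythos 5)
Your proposal has genuine gaps in all four parts, and they share a common root: you never invoke the two geometric tools the paper's proof runs on, namely the finiteness of the Gauss map of the smooth hypersurface $X$ (entering through \autoref{thm:linY-n-1-pl} and \autoref{thm:Loc-st-neq-Y}) and the cone lemma \autoref{thm:conex=coney}. Concretely, in (a) your upper bound fails: the evaluation map $\sU_{{\Rs_x}'}\to\Lsx$ is \emph{not} generically finite. For general $(C,x,y)\in\sUs\times_R\sU$ its fiber over $y$ contains $R_{xy}'\times\{y\}$, which lies in $\sU_{{\Rs_x}'}$ by \autoref{thm:Rxy-sub-Rsx} and has dimension $\geq n-5\geq 1$ by \autoref{eq:dimRxy} (\autoref{thm:URxy-Loc-inj} gives generic finiteness only for conics through \emph{two} fixed points). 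Your own fiber count makes the inconsistency visible: $\dim {\Rs_x}'\geq r-\dim\ev(\sUs)\geq(3n-13)-(n-3)=2n-10$, so generic finiteness would force $\dim\Lsx\geq 2n-9>n-4$, the opposite of the bound you need. The true content of the upper bound is \autoref{thm:Loc-st-neq-Y}: $\Loc(\Rs_x)\neq Y$ for general $x\in\ev(\sUs)$, so that $\Lsx$ is an irreducible proper closed subset of the irreducible $(n-3)$-fold $Y$. This is not a dimension count; it is proved by noting that $\Loc(\Rs_x)=Y$ would force $Y\subset\TT_xX$, hence $\lin{Y}=\TT_xX$ by \autoref{thm:linY-n-1-pl}, and the finite Gauss map of smooth $X$ then allows only finitely many such $x$, while $\ev(\sUs)$ is positive-dimensional.

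The later parts have analogous problems. In (b), ``swept by conics through $x$'' does not pin down the span: the projected Veronese surface in $\PP^4$ is swept by conics through any fixed point of it, yet it spans $\PP^4$ rather than a $3$-plane. What the paper uses is the sharper output of (a): for \emph{general} $y\in\Lsx$ one has $\Lsx=\Loc(R_{xy}')$, hence $\Cone_y\Lsx=\overline{\bigcup_{C\in R_{xy}'}\lin{C}}=\Cone_x\Lsx$; since the cones from all general points of $\Lsx$ coincide, \autoref{thm:conex=coney} yields the $(n-3)$-plane. In (c), comparing $\dim(\sUs\times_R\sU\times_R\sU)=r+2$ with the target dimension can never prove dominance; the paper uses irreducibility of the target and produces the required conic directly: for general $x$ and general $y,z\in\Lsx$, part (a) gives $\Lsx=\Loc(R_{xy}')$, so some $C\in R_{xy}'\subset{\Rs_x}'$ passes through $z$. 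In (d) your key step is wrong: \autoref{thm:Rxy-Rsx} applies only to conics \emph{through $x$}, whereas a general member of $R_{yz}'$ passes through $y$ and $z$ but not through $x$, and a conic meeting the hyperplane $\TT_xX$ in two points need not lie in it; so you cannot conclude $\Loc(R_{yz}')\subset\TT_xX$, and even that inclusion would not single out $\Lsx$ among the components of $Y\cap\TT_xX$. The paper's (d) is a different argument altogether: fix $(y,z)$ general, let $(C,x)$ range over a component $F_{yz}$ of the fiber of $\pr_{34}:(C,x,y,z)\mapsto(y,z)$, and show that $\Lsx$ is \emph{constant} along $F_{yz}$ --- if not, the hypersurfaces $\Lsx$ sweep out $Y$, whence $\Cone_yY=\Cone_zY$ by (b), and \autoref{thm:conex=coney} makes $\lin{Y}$ an $(n-2)$-plane, contradicting \autoref{thm:linY-n-1-pl}. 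The constant value $L$ then contains $\Loc(R_{yz}')$, and equality follows since $\dim\Loc(R_{yz}')\geq n-4=\dim L$. Without the Gauss-map/span input and the cone lemma, none of (a), (b), (d) can be closed.
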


From (a) and (b),
we have that
$\Loc({\Rs_x}')$ is a hypersurface of
$\lin{\Loc({\Rs_x}')} = \PP^{n-3}$.
Moreover, as a corollary,
later we will show that $\Loc({\Rs_x}')$ is a quadric hypersurface,
and also will see our problem is reduced to the case of $n=6$
(\autoref{claim:quad}, \autoref{thm:n=6}).

In order to prove this proposition, we show the following two lemmas.

\begin{lem}\label{thm:Loc-st-neq-Y}\label{thm:LRxst=LRxy}
  $\ev(\sUs) \subset Y$ is of dimension $\geq n-5$.
  Hence
  $\Loc(R_x^{\star}) \neq Y$ for general $x \in \ev(\sUs)$.
\end{lem}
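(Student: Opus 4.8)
The plan is to split the statement into its two assertions and treat them in turn: first the dimension estimate by a pair of evaluation fibrations, then the non-equality by a Gauss-map contraction argument. For the first part, recall that $\sUs$ is irreducible and $\sUs \to R$ is generically finite, so $\dim \sUs = r$. The restricted evaluation $\ev|_{\sUs}\colon \sUs \to \ev(\sUs)$ has, over a general $x$, fiber $\ev^{-1}(x)\cap \sUs$, which $\pi$ carries bijectively onto $\Rs_x$ since $x$ is fixed. Hence $\dim \ev(\sUs) = r - \dim \Rs_x$ for general $x$, and the whole of assertion (1) reduces to the single estimate $\dim \Rs_x \leq 2n-8$: combined with $r \geq 3n-13$ from \autoref{eq:r-geq-exdim} this yields $\dim \ev(\sUs) \geq (3n-13)-(2n-8) = n-5$.

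To bound $\dim \Rs_x$ I would run the evaluation fibration for the conics through the fixed point $x$, namely $\ev\colon \sU_{\Rs_x} \to \Loc(\Rs_x)$. Since each member is a curve, $\dim \sU_{\Rs_x} = \dim \Rs_x + 1$, while $\Loc(\Rs_x) \subset Y$ forces $\dim \Loc(\Rs_x) \leq n-3$. The fiber over a general $z \in \Loc(\Rs_x)$ consists of the conics of $\Rs_x$ through $z$; these pass through both $x$ and $z$, hence lie in $R_{xz}$, and a general one satisfies $\lin{C}\not\subset X$ because $\sUs$ dominates $R$, whose general member obeys \autoref{eq:1}. Applying \autoref{thm:URxy-Loc-inj} to the relevant component $R_{xz}'$ and using $\Loc(R_{xz}')\subset Y$ gives $\dim R_{xz}' = \dim \Loc(R_{xz}') - 1 \leq n-4$, so the general fiber has dimension $\leq n-4$. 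Combining the two, $\dim \Rs_x + 1 \leq \dim \Loc(\Rs_x) + (n-4) \leq (n-3)+(n-4)$, i.e.\ $\dim \Rs_x \leq 2n-8$, which completes (1).

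For the ``Hence'' I would argue by contradiction, mimicking \autoref{thm:sUs-neq-sU}. Suppose $\Loc(\Rs_x)=Y$ for general $x\in\ev(\sUs)$. Since $\Loc(\Rs_x)\subset \TT_xX$, this gives $Y\subset \TT_xX$ and hence $\lin{Y}\subset \TT_xX$. By \autoref{thm:linY-n-1-pl} we have $\dim \lin{Y}\geq n-1 = \dim \TT_xX$, so $\TT_xX = \lin{Y}$ for every such $x$; thus the Gauss map $\gamma_X\colon X\to \Pv[n]$ is constant on $\ev(\sUs)$. But part (1) together with $n\geq d=6$ gives $\dim \ev(\sUs)\geq n-5\geq 1$, so $\gamma_X$ would contract a positive-dimensional subvariety of the smooth $X$, contradicting the finiteness of $\gamma_X$ recorded in \autoref{thm:gmap}. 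Hence $\Loc(\Rs_x)\neq Y$ for general $x$.

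The step I expect to be the main obstacle is the fiber estimate in the second paragraph: one must make sure that, for general $x$ and general $z\in\Loc(\Rs_x)$, the fiber of $\ev$ genuinely meets a component $R_{xz}'$ with $\lin{C}\not\subset X$, so that \autoref{thm:URxy-Loc-inj} is applicable. This amounts to checking that the locus $\{\lin{C}\subset X\}$ is a proper closed subset of $\sU_{\Rs_x}$ (which follows from \autoref{eq:1} and the dominance of $\sUs\to R$) and that no top-dimensional component of the general fiber is contained in it; granting this, the remaining fiber-dimension bookkeeping is routine.
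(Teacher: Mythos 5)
Your numerical skeleton is sound and reproduces the paper's count exactly: $\dim\ev(\sUs)=r-\dim\Rs_x$ for general $x$, the target bound $\dim\Rs_x\leq 2n-8$, and $(3n-13)-(2n-8)=n-5$ give the same inequality $\dim\ev(\sUs)\geq r+2-2\dim Y$ that the paper obtains; and your argument for the ``Hence'' part is essentially identical to the paper's (positive-dimensionality of $\ev(\sUs)$, then $\lin{Y}=\TT_xX$ via \autoref{thm:linY-n-1-pl}, then finiteness of the Gauss map from \autoref{thm:gmap}). The difference is organizational: the paper runs a \emph{single} fibration, namely $\mu\colon \sUs\times_R\sU\to\ev(\sUs)\times Y$, whose fiber through a general point sits inside $R_{xy}$, whereas you run two successive fibrations ($\sUs\to\ev(\sUs)$ with fiber $\Rs_x$, then $\sU_{\Rs_x}\to\Loc(\Rs_x)$ with fibers inside $R_{xz}$). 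Both reduce to one application of \autoref{thm:URxy-Loc-inj} plus $\dim Y = n-3$.

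However, the step you flag as the main obstacle is a genuine gap as you have justified it, and it is precisely where the paper's organization is slicker. Knowing that $\set*{(C,u)\in\sU}{\lin{C}\subset X}$ is a proper closed subset of $\sU$ (which is all that \autoref{eq:1} and dominance of $\sUs\to R$ give) does \emph{not} by itself control the fibers you use: since you fix a general $x$ first and then a general $z\in\Loc(\Rs_x)$, the conics in your fibers are far from general in $R$, and a priori a top-dimensional component of $\Rs_x$, or of the fiber over $z$, could consist entirely of conics with $\lin{C}\subset X$, in which case \autoref{thm:URxy-Loc-inj} gives no bound on it. To close this you need a two-level generic-fiber-dimension argument: $\sUs$ is irreducible and its bad locus is proper closed, so for general $x$ every component of $\ev^{-1}(x)\cap\sUs\cong\Rs_x$ has the generic fiber dimension while the bad part of that fiber is empty or of strictly smaller dimension; hence no component of $\Rs_x$ lies in the bad locus, and the same argument repeated for $\sU_T\to\Loc(T)$, $T$ a top-dimensional component of $\Rs_x$, shows no component of the fiber over general $z$ lies in the bad locus. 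Once a fiber component contains one conic $C$ with $\lin{C}\not\subset X$, the component of $R_{xz}$ containing it has good general member (the bad locus is closed and misses $C$), and the Lemma applies. Note how the paper dissolves this issue by reversing the order of quantifiers: it picks the general triple $(C,x,y)\in\sUs\times_R\sU$ with $\lin{C}\not\subset X$ \emph{first}, and only then takes the component of $R_{xy}$ through $C$, whose general member is then automatically good. Adopting that ordering in your two-step version would let you delete the problematic paragraph entirely.
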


\begin{proof}
  Let $(C, x, y) \in \sUs \times_R \sU$
  be general such that $\lin{C} \not\subset X$.
  Considering the morphism $\mu$,
  ${\Rs_x}' \cap R_y \subset R_{xy}$
  is of dimension $\geq r+1-\dim \ev(\sUs)-\dim Y$.
  From \autoref{thm:URxy-Loc-inj},
  $r+2-\dim \ev(\sUs)-\dim Y \leq \dim Y$.
  Hence $\ev(\sUs) \geq r+2 - 2\dim(Y)$,
  where the right hand side is $\geq n-5$.
  
  Since $n \geq 6$, $\ev(\sUs)$ has positive dimension.
  On the other hand,
  we have $\# \set*{x \in \ev(\sUs)}{\Loc(\Rs_x) = Y} < \infty$
  as follows.
  If $\Loc(\Rs_x) = Y$, then $Y \subset \TT_xX$ and then
  $\lin{Y} = \TT_xX$ because of \autoref{thm:linY-n-1-pl}.
  Hence, by the finiteness of the Gauss map of smooth $X$,
  we have the assertion.
\end{proof}

\begin{lem}\label{thm:conex=coney}
  Let $S \subset \Pn$ be a non-linear projective variety.
  Assume $\Cone_xS = \Cone_yS$ for general $x,y \in S$.
  Then $\lin{S} \subset \Pn$ is a $(\dim(S)+1)$-plane.
\end{lem}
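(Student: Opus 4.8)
The plan is to show that the common cone $T := \Cone_x S$, which by hypothesis is independent of the general choice of $x \in S$, is itself a linear subvariety equal to $\lin{S}$; the dimension statement then follows at once. So I would begin by fixing a dense open $U \subseteq S$ such that $\Cone_x S = \Cone_y S =: T$ for all $x,y \in U$.

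First I would record that each $\Cone_x S$, being a union of lines through $x$, is a cone with vertex $x$: for every $t \in \Cone_x S$ the line $\lin{xt}$ lies in $\Cone_x S$. Since $\Cone_x S = T$ for all $x \in U$, the single variety $T$ is a cone with vertex $x$ for every $x \in U$. I would then use the standard fact that the vertex locus $V := \set*{v \in \Pn}{\lin{vt} \subset T \text{ for all } t \in T}$ of a cone is a linear subvariety of $\Pn$ contained in $T$. If a self-contained argument is preferred, it is short: given two vertices $v_1 \neq v_2$ and a point $t \in T \setminus \lin{v_1 v_2}$, the plane $\lin{v_1 v_2 t}$ meets $T$ in a set that is a cone with the two distinct vertices $v_1,v_2$, hence is the whole plane, so every point of $\lin{v_1 v_2}$ is again a vertex. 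Because $U \subseteq V$ and $V$ is closed, this gives $S = \overline{U} \subseteq V$, and therefore $\lin{S} \subseteq V$.

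On the other hand, since $x \in S$ we have $\lin{T} = \lin{\Cone_x S} = \lin{\set{x} \cup S} = \lin{S}$, so
\[
T \subseteq \lin{T} = \lin{S} \subseteq V \subseteq T,
\]
forcing $T = V = \lin{S}$; in particular $\lin{S}$ is linear. Finally, as $S$ is non-linear it is a proper closed subvariety of the linear space $\lin{S}$, whence $\dim \lin{S} \geq \dim S + 1$; combined with the evident bound $\dim \lin{S} = \dim T = \dim \Cone_x S \leq \dim S + 1$, this yields that $\lin{S}$ is a $(\dim S + 1)$-plane. I expect the only delicate point to be the linearity of the vertex locus $V$, and this is precisely where the hypothesis does the real work: it supplies a whole family of vertices (one for each point of $U$) rather than a single one, and it is this family of vertices that pins $S$ inside the linear space $V$.
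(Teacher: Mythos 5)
Your proof is correct, but it follows a genuinely different route from the paper's. The paper argues by linear projection: for general $x \in S$ it considers $\pi_x : \Pn \dashrightarrow \PP^{n-1}$ and uses the hypothesis to write $\pi_x(S) = \pi_x(\Cone_xS) = \pi_x(\Cone_yS) = \Cone_{\pi_x(y)}(\pi_x(S))$, so that the image $\pi_x(S)$ is a cone over its own general point; it then invokes, without proof, the fact that such a variety must be a linear space, concludes that $\Cone_xS$ is a $(\dim S + 1)$-plane, and deduces the assertion. You never project: you work directly in $\Pn$ with the common cone $T = \Cone_xS$, observe that the hypothesis makes every general point of $S$ a vertex of $T$, prove that the vertex locus $V$ is a closed linear subvariety of $T$ (via the two-vertices-give-a-plane-of-vertices argument), and let the chain $T \subseteq \lin{T} = \lin{S} \subseteq V \subseteq T$ identify $T$ with $\lin{S}$. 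Your route buys two things: it is self-contained, since your plane argument is exactly a proof of the ``a variety that is a cone over each of its general points is linear'' fact that the paper quotes implicitly; and your closing dimension count (non-linearity gives $\dim\lin{S} \geq \dim S + 1$, the cone bound gives $\dim\lin{S} = \dim T \leq \dim S + 1$) cleanly absorbs a small step the paper glosses over, namely that $\dim \pi_x(S)$ equals $\dim S$ rather than $\dim S - 1$, i.e., that $S$ is not itself a cone with vertex a general $x$. What the paper's projection buys is brevity: granting the quoted fact, its argument is only a few lines and drops the ambient dimension by one.
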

\begin{proof}
  Take a general point $x \in S$
  and consider the linear projection
  $\pi_x: \PP^n \rightarrow \PP^{n-1}$ from $x$.
  Then we have
  \[
  \pi_x(S) = \pi_x(\Cone_xS)
  = \pi_x(\Cone_yS) = \Cone_{\pi_x(y)}(\pi_x(S))
  \]
  for general $y \in S$.
  Hence
  $\pi_x(S) = \Cone_{y'}(\pi_x(S))$
  for general $y' \in \pi_x(S)$.
  This means that $\pi_x(S)$ is a $(\dim(S))$-plane.
  Hence $\Cone_xS$ is a $(\dim(S)+1)$-plane, which implies the assertion.
\end{proof}

\begin{proof}[Proof of \autoref{thm:Px-const}]
  \begin{inparaenum}
  \item 
    We may assume $\lin{C} \not\subset X$.
    From \autoref{thm:URxy-Loc-inj} and the formula~\autoref{eq:dimRxy},
    we have $\dim \Lxy \geq n-4$.
    On the other hand, \autoref{thm:LRxst=LRxy} implies
    $\Lsx \leq n-4$.
    Hence
    \autoref{thm:Rxy-sub-Rsx} implies
    $\Lxy = \Lsx$ and the dimension is $n-4$.

  \item 
    From (a),
    for general $y \in \Loc({\Rs_x}')$,
    it follows $\Loc({\Rs_x}') = \Loc(R_{xy}')$,
    where the right hand side is
    the closure of $\bigcup_{C \in R_{xy}'} C \subset \Pn$.
    Since $\Cone_xC = \lin{C} = \Cone_yC$,
    \[
    P_x := \Cone_x\Loc({\Rs_x}')
    = \overline{\bigcup_{C \in R_{xy}'} \lin{C}}
    = \Cone_y\Loc({\Rs_x}').
    \]
    For two general points $y_1, y_2 \in \Loc({\Rs_x}')$,
    we have
    $\Cone_{y_1}\Loc({\Rs_x}') = P_x = \Cone_{y_2}\Loc({\Rs_x}')$.
    Hence the assertion follows from \autoref{thm:conex=coney}.

  \item 
    Since ${\im(\mu) \times_{\ev(\sUs)} \im(\mu)}$ is irreducible,
    it is sufficient to show that,
    for general $x \in \ev(\sUs)$ and for general $y,z \in \Lsx$,
    there exists $C$ such that $(C,x,y,z) \in \sUs \times_R \sU \times_R \sU$.

    First we can take a general $(C_0,x,y) \in \sUs \times_R \sU$ with some $C_0 \in {\Rs_x}'$.
    From (a), we have
    $\Lsx = \Loc(R_{xy}')$.
    Since $z \in \Lsx$ is general,
    we have a general $C \in R_{xy}'$ such that $z \in C$.
    
  \item 
    Consider the projection
    \[
    \pr_{34}: \sUs \times_R \sU \times_R \sU \rightarrow Y \times Y
    \]
    sending $(C,x,y,z) \mapsto (y,z)$.
    Let $F_{yz}$ be an irreducible component of
    the fiber of $\pr_{34}$
    at a general $(y,z) \in Y \times Y$. We identify $F_{yz}$
    and its image in $\sUs$ under
    the projection $\pr_{12}: (C,x,y,z) \mapsto (C,x)$.

    Let us consider
    \[
    {\bigcup_{(C,x) \in F_{yz} \text{: general}} \Lsx}
    \quad \subset {\bigcup_{(C,x) \in F_{yz} \text{: general}} \lin{\Lsx}}.
    \]
    Suppose that the closure of the left hand side is equal to $Y$.
    Then (b) implies
    \[
    \Cone_yY = \Cone_y \overline{\bigcup_{(C,x)} \Lsx} = \overline{\bigcup_{(C,x)} \lin{\Lsx}}.
    \]
    In the same way, $\Cone_zY = \overline{\bigcup_{(C,x)} \lin{\Lsx}}$.
    Hence $\Cone_yY = \Cone_zY$.
    Since $y,z \in Y$ are general, \autoref{thm:conex=coney} implies that
    $\lin{Y} \subset \Pn$ is an $(n-2)$-plane, which contradicts
    \autoref{thm:linY-n-1-pl}.

    Hence the closure of $\bigcup_{(C,x) \in F_{yz} \text{: general}} \Lsx$ is not equal to $Y$.
    Since $\Lsx$ is of codimension $1$ in $Y$,
    it means that $L := \Lsx$ is constant for general $(C,x) \in F_{yz}$.
    Then $\Loc(R_{yz}') = \overline{\bigcup_{(C,x) \in F_{yz} \text{: general}} C} \subset L$.
    Since $\dim \Loc(R_{yz}') \geq n-4$,
    the assertion follows.
  \end{inparaenum}
\end{proof}

\begin{cor}\label{claim:quad}
  Assume \as.
  Then the following holds.
  \begin{enumerate}
  \item 
    For general $(C, x) \in \sUs$,
    $\Lsx \subset \lin{\Lsx} = \PP^{n-3}$ is a quadric hypersurface.
  \item
    The projection to the second factor
    \begin{equation*}      q: \set*{(C, H) \in R \times \Pv}{C \subset H} \rightarrow \Pv
    \end{equation*}
    is dominant. (In particular, a general fiber of $q$
    is of dimension $r-3$.)
  \end{enumerate}
\end{cor}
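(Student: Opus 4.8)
Write $Z := \Lsx$ for a general $(C,x) \in \sUs$. By \autoref{thm:Px-const}(a),(b) this $Z$ is an irreducible hypersurface in $\lin{Z} = \PP^{n-3}$; it is non-linear, since otherwise $\lin{C} \subset Z \subset X$ would violate the standing hypothesis \autoref{eq:1}, so $\delta := \deg Z \geq 2$. The plan for (a) is to show that $\lin{C} \cap Z = C$ scheme-theoretically for a general conic $C$ of the sweeping family. Because $\lin{C} \not\subset X \supset Z$, this intersection is proper, hence a plane curve of degree $\delta$ inside $\lin{C} = \PP^2$ containing $C$; the equality $\lin{C}\cap Z = C$ then forces $\delta = 2$. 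The two steps needed are the absence of extra components and reducedness, exactly as in Step 4 of the proof of \autoref{thm:linY-n-1-pl}.

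To rule out extra components I would use \autoref{thm:Px-const}(d): for two general points $y,z \in C$ one has $Z = \Loc(R_{yz}')$, so $\sU_{\overline{R_{yz}'}} \to Z$ is surjective. If $E$ were an irreducible component of $(\lin{C}\cap Z)_{\mathrm{red}}$ other than $C$, I would pick $y,z$ general on $C$ so that they avoid the finitely many points of $C \cap E$, and a general $w \in E$ with $w \notin \lin{yz}$; surjectivity yields $\tilde C \in \overline{R_{yz}'}$ through $y,z,w$, whence $\lin{\tilde C} = \lin{yzw} = \lin{C}$ and $\tilde C \subset \lin{C}\cap Z$. As $\tilde C$ is irreducible and passes through $y$, which lies on $C$ but on no $E$, it must lie in $C$, forcing $\tilde C = C$ and $w \in C$, a contradiction. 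Hence $(\lin{C}\cap Z)_{\mathrm{red}} = C$.

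For reducedness I expect the main obstacle, since the surface Gauss-map argument of \autoref{thm:linY-n-1-pl} is unavailable in dimension $n-4$. Instead I would argue as follows. If $\lin{C}\cap Z$ were non-reduced, then $\lin{C}$ is tangent to $Z$ along $C$, i.e. $\lin{C} \subset \TT_w Z$ for a general $w \in C$; since such $w$ is then a general point of $Z$ and every conic $C'$ of the family through $w$ is again a general member, the same tangency gives $\lin{C'} \subset \TT_w Z$ for all these $C'$. But the planes $\lin{C'}$ sweep out the cone $\Cone_w Z$, which is the whole $(n-3)$-plane $\lin{Z}$ by the computation in the proof of \autoref{thm:Px-const}(b), whereas $\TT_w Z$ is only an $(n-4)$-plane --- a contradiction. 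This yields $\lin{C}\cap Z = C$ and $\delta = 2$, proving (a).

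For (b) I would exploit the quadric structure from (a): every $2$-plane through $x$ inside $\lin{Z} = \PP^{n-3}$ meets the quadric $Z$ in a conic through $x$, and the morphism ${\Rs_x}' \to \{2\text{-planes through } x \text{ in } \PP^{n-3}\}$ sending $C \mapsto \lin{C}$ is a dominant map between irreducible varieties of the same dimension $2n-10$. Given a general $H \in \Pv$, I would choose a general $x \in \ev(\sUs) \cap H$, which is non-empty because $\dim \ev(\sUs) \geq n-5 \geq 1$; then $\Lambda := H \cap \lin{Z}$ is a hyperplane of $\PP^{n-3}$ through $x$, and for general $H$ the $2$-planes through $x$ contained in $\Lambda$ meet the dominant image of $C \mapsto \lin{C}$, producing $C \in {\Rs_x}'$ with $\lin{C} \subset \Lambda \subset H$ and hence $C \subset H$. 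As $H$ ranges over a dense set this shows $q$ is dominant, and the fibre dimension $r-3$ follows from $\dim\{(C,H) : C \subset H\} = r+(n-3)$. The delicate point in (b) is exactly this last realisation: one must know that the $2$-plane found in the hyperplane section is genuinely $\lin{C}$ for a member $C$ of $R$, which is guaranteed only because (a) makes $C \mapsto \lin{C}$ dominant.
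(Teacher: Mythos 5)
Your overall strategy for (a) --- prove $\lin{C}\cap \Lsx = C$ scheme-theoretically and conclude $\deg \Lsx = 2$ --- is viable and genuinely different from the paper's, but your reducedness step contains a genuine gap. You assert that ``every conic $C'$ of the family through $w$ is again a general member,'' and this is false as stated: the conics through a fixed point $w$ form a positive-codimensional subfamily of $R$, and nothing guarantees that its members (even its general members) lie in the dense open subset of $\sUs$ where your contradiction hypothesis (non-reducedness of $\lin{C'}\cap\Lsx$ along $C'$) holds. A priori the tangency $\lin{C'}\subset\TT_w\Lsx$ is available only for conics that are general in $R$, whereas you need it for enough conics through $w$ to sweep out $\Cone_w\Lsx$. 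The gap is repairable, but only with an extra argument: take $(C_0,x,w)$ general in $\sUs\times_R\sU$ and let $R_{xw}'$ be the irreducible component of $R_{xw}$ containing $C_0$; by \autoref{thm:Rxy-Rsx} and \autoref{thm:Rxy-sub-Rsx} every $C'\in R_{xw}'$ gives a point $(C',x)$ of $\sUs$, by \autoref{thm:Px-const}(a) these conics sweep out $\Lsx$, so their planes sweep out $\Cone_w\Lsx=\lin{\Lsx}$ (the computation in the proof of \autoref{thm:Px-const}(b)). Since $R_{xw}'$ is irreducible and contains $C_0$, which \emph{does} lie in the open locus where the non-reducedness hypothesis holds, its general member lies there as well; only with this observation do you get $\lin{C'}\subset\TT_w\Lsx$ for a sweeping family of $C'$, and hence the contradiction $\lin{\Lsx}\subset\TT_w\Lsx$. (\autoref{thm:Px-const}(c) is the paper's device for exactly this kind of quantifier problem, and it could be used instead.)

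Two further remarks. In your component-exclusion step, ``$\tilde C$ is irreducible'' is unjustified: $\tilde C$ lies in the closure $\overline{R_{yz}'}\subset\Hilb^{2t+1}(X)$ and may be a line pair or a double line; repair it as in Step 4 of the paper's proof of \autoref{thm:linY-n-1-pl} (the irreducible component of $\tilde C$ through $y$ is contained in $C\cup\bigcup_iE_i$ and contains $y\notin\bigcup_iE_i$, hence equals $C$; then $C\subset\tilde C$ and equality of Hilbert polynomials forces $C=\tilde C$). For comparison, the paper's own proof of (a) sidesteps both difficulties: it fixes general $y,z\in\Lsx$, notes via \autoref{thm:URxy-Loc-inj} and \autoref{eq:dimRxy} that $R_{yz}'\to\lin{yz}^*$, $C\mapsto\lin{C}$, is generically finite and hence dominant (as $\dim R_{yz}'\geq n-5=\dim\lin{yz}^*$), and then uses that the section of $\Lsx$ by a general $2$-plane $\tilde M\supset\lin{yz}$ is irreducible, so it coincides with the conic $\tilde C$ spanning $\tilde M$; no analysis of non-reducedness is required. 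Your part (b) is essentially the paper's argument run in the opposite direction (the paper takes a general hyperplane through the general plane $\tilde M$; you cut a general $H$ down to $H\cap\lin{\Lsx}$), and it is correct modulo routine incidence-variety bookkeeping; note only that the dominance of $C\mapsto\lin{C}$ on ${\Rs_x}'$ comes from the dimension count $\dim{\Rs_x}'\geq r-\dim\ev(\sUs)\geq 2n-10$ together with generic finiteness, not from (a) itself.
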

\begin{proof}
  \begin{inparaenum}
  \item 
    Let $y, z \in \Lsx$ be general.
    Let
    $M \subset \lin{\Lsx}$ be a general $2$-plane
    such that $y,z \in M$.

    From \autoref{thm:Px-const}(c),
    $(C_0, x,y,z)$ is general in $\sUs \times_R \sU \times_R \sU$ with some $C_0$.
    From \autoref{thm:Px-const}(d), it holds that $\Loc(R_{yz}') = \Lsx$.
    We consider
    \[
    R_{yz}' \rightarrow \lin{yz}^*: C \mapsto \lin{C},
    \]
    which is generically finite,
    where $\lin{yz}^* \subset \Gr(2, \lin{\Lsx})$
    is the set of $2$-planes containing the line $\lin{yz}$.
    Since $\dim R_{yz}' \geq n-5 = \dim \lin{yz}^*$,
    this morphism is dominant.

    Thus we can take a general $\tilde M \in \lin{yz}^* \subset \Gr(2, \lin{\Lsx})$ near $M$
    such that $\tilde M = \lin{\tilde C}$ for some $\tilde C \in R_{yz}'$.
    By generality, $\tilde M \cap \Lsx$ is irreducible.
    Hence $\tilde C = \tilde M \cap \Lsx$, which means that $\deg(\Lsx) = 2$.

  \item
    For $\tilde M$ in (a) above,
    we can take a hyperplane $\tilde H \subset \Pn$ containing
    $\tilde M$ as a general element of $\Pv$, and then
    $(\tilde C, \tilde H) \in R \times \Pv$,
    which means that $q$ is dominant.
    
    Considering the projection to the first factor,
    we find that the dimension of the left hand side of $q$
    is $r + n - 3$.
    Thus a general fiber of $q$
    is of dimension $r-3$.
  \end{inparaenum}
\end{proof}

\begin{rem}
  It is known that if a smooth hypersurface in $\Pn$ of degree $> 2$
  contains an $m$-dimensional quadric hypersurface,
  then $m \leq (n-1)/2$.
  Thus \autoref{claim:quad}(a) implies $n \leq 7$.
\end{rem}

\begin{rem}\label{thm:n=6}
  By induction on $n$, 
  it is sufficient to show \autoref{thm:main} in the case of $n = 6$.
  We see the details in the following.
  Let $n > 6$ 
  and let $R \subset R_2(X)$ be an irreducible component.

  Assume \as, i.e.,
  $r := \dim R$
  is greater than the expected one.
  Since $q$ is dominant as in \autoref{claim:quad}(b),
  a fiber of $q$ at general $H \in \Pv$,
  which identified with $R \cap R_2(X \cap H)$,
  is of dimension $r-3$.
  Again since $q$ is dominant, we may take a conic $C \subset H$
  as a general member of $R$.
  Then we may take an irreducible component
  $R'$ of $R_2(X \cap H)$
  containing $C$ such that $\dim R' \geq r-3 \geq 3n-16$.
  Since $C$ satisfies $\lin{C} \not\subset \TT_xX$,
  we have $\lin{C} \not\subset \TT_xX \cap H = \TT_x(X \cap H)$.
  Then a general $\tilde C \in R'$ satisfies
  $\lin{\tilde C} \not\subset \TT_x(X \cap H)$.

  Since $H$ is general, $X \cap H \subset H = \PP^{n-1}$ is smooth.
  Once \autoref{thm:main} is proved for $n-1$, we have a contradiction
  since $\dim R'$ must be equal to the expected dimension $3(n-1)-14$.
\end{rem}

Using the above results and notations, we now prove the main theorem.

\begin{proof}[Proof of \autoref{thm:main}]
  Let $R \neq \emptyset$ be an irreducible component of $R_2(X)$
  satisfying the condition \autoref{eq:1},
  and assume that $r := \dim R$ is greater than the expected dimension.
  From \autoref{thm:6_n-3}, \autoref{claim:quad}(b), and \autoref{thm:n=6},
  we may assume $n=d=6$ and $\dim Y = 3$. Then $r \geq 5$.

  \begin{claim}\label{degY34}
    $\deg(Y) = 3 \text{ or } 4$ and $\dim \lin{Y} \geq 5$.
  \end{claim}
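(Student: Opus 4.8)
The three assertions are of quite different depth, so the plan is to dispose of the spanning statement and the lower bound first and to spend the real effort on the inequality $\deg(Y)\le 4$. Since we are in the case $n=6$, the bound $\dim\lin{Y}\ge 5$ is nothing but \autoref{thm:linY-n-1-pl}. The lower bound $\deg(Y)\ge 3$ is then formal: $Y$ is an irreducible $3$-fold that is non-degenerate in $\lin{Y}$, and $\dim\lin{Y}\in\{5,6\}$; since an irreducible non-degenerate variety of dimension $k$ in $\PP^m$ has degree at least $m-k+1$, we get $\deg(Y)\ge (5-3)+1=3$ (and even $\deg(Y)\ge 4$ should $\lin{Y}=\PP^6$).

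For the upper bound I would reduce from the $3$-fold $Y$ to a surface by cutting with a general hyperplane, the decisive gain being that the quadric surfaces produced in \autoref{claim:quad} survive the cut as honest conics. Put $S:=Y\cap H$ for a general hyperplane $H\subset\PP^6$; by Bertini $S$ is irreducible and non-degenerate in $\lin{Y}\cap H$, with $\deg(S)=\deg(Y)$. The point I would establish is that $S$ is conic-connected. For general $y,z\in S$, \autoref{claim:quad}(a) together with \autoref{thm:Px-const}(d) furnishes a quadric surface $Q=\Loc(R_{yz}')\subseteq Y$ with $y,z\in Q$ and $\lin{Q}=\PP^3$; since $y,z\in H$, the plane $\Pi:=H\cap\lin{Q}$ cuts $Q$ in a conic $Q\cap\Pi\subseteq S$ passing through $y$ and $z$. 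Hence two general points of $S$ lie on a conic contained in $S$.

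Granting this, the bound is immediate from the classification of conic-connected surfaces: an irreducible, non-degenerate, non-linear surface containing an irreducible conic through two general points is projectively equivalent to one of a short list of surfaces of degree at most $4$ (the quadric, the cubic scroll, and the Veronese surface $v_2(\PP^2)$ and its isomorphic projection). Thus $\deg(Y)=\deg(S)\le 4$, which with the lower bound yields $\deg(Y)\in\{3,4\}$. It is worth noting that this route explains exactly why the hypothesis $\dim R_{xy}\ge 1$ is indispensable: a merely conic-connected $3$-fold may have large degree (e.g.\ $v_2(\PP^3)$ has degree $8$), but it carries no quadric surface through two general points, so its general surface section is \emph{not} conic-connected — it is the quadrics of \autoref{claim:quad}, coming from the positive-dimensional families $R_{yz}'$, that force the degree down.

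The hard part will be the two pieces of genericity hidden in this reduction. First, I must verify that $Q\cap\Pi$ is an irreducible conic, i.e.\ that the plane $\Pi$ through $y$ and $z$ is neither tangent to $Q$ nor splits it into a pair of lines; this requires a separate general-position argument, because $\Pi$ is not an arbitrary plane but one constrained to contain $y$ and $z$, and one must also check that two general points of $S$ remain general enough in $Y\times Y$ for $\Loc(R_{yz}')$ to be one of the quadrics. Second, the classification I wish to invoke is stated for smooth surfaces, whereas $Y$ — and hence $S$ — may a priori be singular; I would therefore need either to control $\Sing(Y)$ so that a general $S$ is smooth, or to use a form of the degree estimate that tolerates the mild singularities a general hyperplane section can carry.
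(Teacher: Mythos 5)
Your first two assertions coincide with the paper's own treatment: $\dim\lin{Y}\geq 5$ is exactly \autoref{thm:linY-n-1-pl} with $n=6$, and $\deg(Y)\geq 3$ is the standard lower bound for irreducible non-degenerate varieties. For the upper bound you take a genuinely different route. The paper also cuts by a general hyperplane $H$, but it produces conics on $Y'=Y\cap H$ directly from $R$: by \autoref{claim:quad}(b) the map $q$ is dominant with fibers of dimension $r-3\geq 2$, so $R\cap R_2(Y')$ is a $2$-dimensional family of conics lying on the surface $Y'$, and the classical Semple--Roth classification of surfaces with a $2$-dimensional family of conics (\cite[p.~130, p.~157]{SR}) gives $\deg Y'\leq 4$ at once. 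You instead slice the quadrics $Q_{yz}=\Loc(R_{yz}')$ of \autoref{thm:Px-const}(d) and \autoref{claim:quad}(a) by $H$, obtaining conic-connectedness of $S=Y\cap H$. Your two genericity worries on this step are both manageable: choosing $H$ general and then $y,z\in S$ general is the same as choosing $y,z\in Y$ general and then $H$ general through them, so $(y,z)$ is indeed general in $Y\times Y$; and the plane section $Q_{yz}\cap\Pi$ is an irreducible conic for general $\Pi$ in the pencil through $\lin{yz}$, because $\lin{yz}\not\subset Q_{yz}$ ($Y$ is non-linear), and if $Q_{yz}$ is a quadric cone its vertex cannot lie on $\lin{yz}$ --- the irreducible conics of $R_{yz}'$ pass through $y,z$, hence their planes contain $\lin{yz}$, while an irreducible conic on a cone spans a plane missing the vertex.

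The one substantive problem is the final classification step as you state it. The classification of conic-connected surfaces you invoke (quadric, cubic scroll, Veronese and its projection) is a theorem about \emph{smooth} surfaces, and your proposed fix of controlling $\Sing(Y)$ is not available: nothing at this stage bounds $\Sing(Y)$ below dimension $2$, so $\Sing(S)$ may be a curve, which a general hyperplane cut cannot remove. The repair is your second suggestion, and it lands exactly on the paper's citation: conic-connectedness of $S$ already yields a family of irreducible conics on $S$ of dimension $\geq 2$ (the incidence variety $\set*{(C,y,z)}{y,z\in C}$ dominates $S\times S$ and has $2$-dimensional fibers over the space of conics), and the Semple--Roth statement is about this $\infty^2$-conics condition and tolerates the singular linear projections of the Veronese. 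With that substitution your argument closes, at the cost of being longer than the paper's, which gets its family of conics on $Y\cap H$ for free from \autoref{claim:quad}(b) --- the conics there are members of $R$ itself, so no slicing or irreducibility analysis is needed.
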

  \begin{proof}
    From \autoref{thm:linY-n-1-pl},
    we have $\dim \lin{Y} \geq 5$.
    In particular, $\deg(Y) \geq 3$.
    Let $H \subset \PP^6$ be a general hyperplane,
    and set $Y' = Y \cap H$.
    It follows from \autoref{claim:quad}(b) again
    that $R \cap R_2(Y')$ is of dimension $\geq 2$.
    It is classically known that,
    if a surface $Y'$ has a $2$-dimensional family of conics, then 
    $Y'$ is projectively equivalent to either the Veronese surface
    $\PP^2 \hookrightarrow \PP^5$
    or its image under linear projections
    (see \cite[p.~130, p.~157]{SR}).
    Hence $\deg(Y) = \deg(Y') \leq 4$.
  \end{proof}

  Our first goal is to show that
  $\lin{Y} \subset \PP^6$ is of dimension $5$.
  Set
  $Q_{yz} \subset Y$
  to be the surface $\Loc(R_{yz}')$
  for general $(C, y, z) \in \sU \times_R \sU$.

  \begin{claim}\label{Lyz=Ltildeyz}
    $Q_{yz}$ is a quadric surface, and
    $Q_{yz} = Q_{\tilde y z}$ holds for general $(\tilde C, \tilde y) \in \sU_{R_{yz}'}$.
  \end{claim}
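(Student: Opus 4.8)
The plan is to treat the two assertions in turn, using the quadric $\Lsx$ produced in \autoref{claim:quad}(a) as the common object.

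\emph{That $Q_{yz}$ is a quadric surface.} Since $(C,y,z)$ is general in $\sU\times_R\sU$, the conic $C$ is general in $R$; as $\sUs\to R$ is dominant and generically finite (because $\dim\sUs=\dim R=r$), the conic $C$ carries a special point $x$ with $(C,x)\in\sUs$ general, and then $(C,x,y,z)$ is general in $\sUs\times_R\sU\times_R\sU$. Hence \autoref{thm:Px-const}(d) gives $Q_{yz}=\Loc(R_{yz}')=\Lsx$, and by \autoref{claim:quad}(a), with $n=6$ so that $\lin{\Lsx}=\PP^3$, this is a quadric surface. In particular $\dim R_{yz}'=1$ by \autoref{thm:URxy-Loc-inj}, and $C\subset Q_{yz}=:Q\subset\TT_xX$. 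Moreover, $C$ is a general member of ${\Rs_x}'$ and $y,z$ are general on it, so $y,z$ are general points of the surface $Q$.

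\emph{That $Q_{yz}=Q_{\tilde yz}$.} Fix $(\tilde C,\tilde y)\in\sU_{R_{yz}'}$ general, so $\tilde C\in R_{yz}'$ is general and $\tilde y\in\tilde C$ is general; since $\tilde C\in R_{yz}'$ we have $\tilde C\subset Q$ and $z\in\tilde C$, and since the evaluation $\sU_{R_{yz}'}\to Q$ is dominant, $\tilde y$ is a general point of $Q$. I would prove $Q_{\tilde yz}=Q$ by two inclusions. For the inclusion $Q\subset Q_{\tilde yz}$, the plane sections of $Q$ cut by the pencil of planes through the chord $\lin{\tilde yz}$ form an irreducible one-dimensional family $\Gamma$ of conics, all lying on $Q\subset X$ and passing through $\tilde y,z$, with $\tilde C\in\Gamma$ and $\Loc(\Gamma)=Q$; as $\Gamma$ is irreducible and meets the component $R$ at $\tilde C$, it lies in $R_{\tilde yz}'$, whence $Q\subset Q_{\tilde yz}$. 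For the reverse inclusion it then suffices to show $\dim R_{\tilde yz}'=1$, that is $Q_{\tilde yz}\neq Y$: once this holds, $Q\subset Q_{\tilde yz}$ with both irreducible surfaces forces $Q=Q_{\tilde yz}$.

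The main obstacle is exactly this bound $\dim R_{\tilde yz}'=1$. The difficulty is that the pair $(\tilde y,z)$ is \emph{not} general in $Y\times Y$: both points lie on the proper subvariety $Q\subset Y$, so that $(\tilde y,z)$ sits on a special fibre of $\ev^{(2)}$, where the fibre dimension may jump, and neither \autoref{thm:Px-const}(d) nor \autoref{claim:quad}(a) is available to compute $\Loc(R_{\tilde yz}')$ directly. Every point reachable along conics of this configuration remains on $Q$, so the generic-in-$Y\times Y$ hypothesis cannot be re-accessed, and the bound must instead be extracted from the rigidity of the single quadric $Q$. Concretely I would aim to show that \emph{every} conic $C'\in R_{\tilde yz}'$ lies on $Q$, which gives $Q_{\tilde yz}\subset Q$ outright; the route is to transport the special-point construction to $\tilde C$ (a smooth conic on $Q\subset\TT_xX$ always carries a special point $\tilde x\in\tilde C\subset Q$), to identify $\Loc({\Rs_{\tilde x}}')$ with $Q$, and then to use \autoref{thm:Rxy-Rsx} together with the two general points $\tilde y,z\in Q$ to confine $C'$ to the quadric. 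The delicate point, where I expect the real work to lie, is the genericity needed to run \autoref{claim:quad}(a) at $\tilde C$, since $\tilde C$ is general only in the one-dimensional family $R_{yz}'$ and $z$ is a base point rather than a general point of $\tilde C$; failing a direct transport, the alternative is to assume $Q_{\tilde yz}=Y$ and derive a contradiction with \autoref{thm:linY-n-1-pl} by a cone argument of the type used in \autoref{thm:Px-const}(d), applied at the fixed point $z$.
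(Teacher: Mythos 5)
Your first assertion (that $Q_{yz}$ is a quadric surface) is proved correctly and exactly as in the paper: lift $(C,y,z)$ to a general point of $\sUs \times_R \sU \times_R \sU$ and combine \autoref{thm:Px-const}(d) with \autoref{claim:quad}(a). The trouble is with the second assertion, and it originates in your diagnosis of the ``main obstacle'': you claim that $(\tilde y,z)$ is \emph{not} general in $Y\times Y$ because both points lie on the fixed quadric $Q$. This is a misconception. Lying together on a quadric of this family is not a special condition --- by \autoref{thm:u2surj} \emph{every} general pair of points of $Y$ lies on some such surface $Q_{pq}$; what matters is whether the constructed pair avoids fixed proper closed subsets of $Y\times Y$, and it does. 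Concretely, the quadruples $(\tilde C, y, z, \tilde y)$ produced by the construction fill up, over each general $(y,z)\in Y\times Y$, a dense subset of $\sU_{R_{yz}'}$, which has dimension $\dim R_{yz}'+1=(r-4)+1=r-3$; hence they sweep out a subset of dimension $6+(r-3)=r+3=\dim\bigl(\sU\times_R\sU\times_R\sU\bigr)$ of that irreducible space, so they are dense in it. Therefore $(\tilde C,\tilde y,z)$ is a \emph{general} point of $\sU\times_R\sU$, and $\tilde C$ is a general member of $R$, not merely of the curve $R_{yz}'$. This is precisely how the paper argues: since the quadruple is general, $R_{yz}'$ and $R_{\tilde yz}'$ are the unique components of $R_{yz}$ and $R_{\tilde yz}$ through $\tilde C$, and choosing $x\in\tilde C$ with $(\tilde C,x)$ general in $\sUs$, \autoref{thm:Px-const}(d) applies to both $(\tilde C,x,y,z)$ and $(\tilde C,x,\tilde y,z)$, giving $Q_{yz}=\Lsx=Q_{\tilde yz}$ at once.

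Having denied yourself this genericity, your substitute argument has two genuine gaps. First, in the inclusion $Q\subset Q_{\tilde yz}$, the step ``$\Gamma$ is irreducible and meets the component $R$ at $\tilde C$, whence it lies in $R_{\tilde yz}'$'' is unjustified: an irreducible curve in $R_2(X)$ through $\tilde C$ need not lie in $R$ (nor, even granted that, in the component of $R_{\tilde yz}$ containing $\tilde C$) unless one knows that $\tilde C$ belongs to a \emph{unique} irreducible component of $R_2(X)$ and of $R_{\tilde yz}$ --- which is exactly the genericity you discarded. Second, the reverse inclusion, i.e.\ the bound $Q_{\tilde yz}\neq Y$ (equivalently $\dim R_{\tilde yz}'=1$), is never established: you explicitly defer it as the place ``where the real work lies'' and only sketch two possible routes, the first of which you yourself note founders on the same genericity issue, while the second (a cone argument against \autoref{thm:linY-n-1-pl}) is not carried out. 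So the proof of $Q_{yz}=Q_{\tilde yz}$ is incomplete as written; ironically, both missing steps are supplied by the one fact you declared unavailable.
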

  \begin{proof}
    Since $(\tilde C,y,z,\tilde y)$ is general in $\sU \times_R \sU \times_R \sU$,
    $R_{yz}'$ (resp. $R_{\tilde yz}'$)
    is the unique irreducible component
    of $R_{yz}$ (resp. $R_{\tilde yz}$)
    containing $\tilde C$.
    We can take $x \in \tilde C$
    such that $(\tilde C, x)$ is general in $\sUs$. Then
    \autoref{thm:Px-const}(d) implies
    $Q_{yz} = \Lsx = Q_{\tilde yz}$,
    which is a quadric
    as in \autoref{claim:quad}.
  \end{proof}

  For general $y, z_1, z_2 \in Y$ (with some $C_i$ such that $(C_i, y, z_i)$ is general in $\sU \times_R \sU$), 
  we write
  \[
  K_{y}^{z_1z_2} := \TT_y Q_{yz_1} \cap \TT_y Q_{yz_2} \subset \TT_yY = \PP^3,
  \]
  whose dimension is $\geq 1$.
  Then $y \in K_{y}^{z_1z_2} \subset \lin{Q_{yz_1}} \cap \lin{Q_{yz_2}}$.
  (Here we do \emph{not} know ``$z_i \in K_{y}^{z_1z_2}$''.)

  \begin{claim}
    $\dim (\lin{Q_{yz_1}} \cap \lin{Q_{yz_2}}) = 1$.
    Hence $K_y^{z_1z_2} = \lin{Q_{yz_1}} \cap \lin{Q_{yz_2}}$.
  \end{claim}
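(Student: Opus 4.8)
The lower bound $\dim(\lin{Q_{yz_1}}\cap\lin{Q_{yz_2}})\geq 1$ is already in hand, since $K_y^{z_1z_2}\subseteq\lin{Q_{yz_1}}\cap\lin{Q_{yz_2}}$ is known to have dimension $\geq 1$, and the final identity $K_y^{z_1z_2}=\lin{Q_{yz_1}}\cap\lin{Q_{yz_2}}$ will follow automatically once the intersection is shown to be a line (a linear space of dimension $\geq 1$ sitting inside a $1$-dimensional one must coincide with it). So the entire content is the bound $\dim(\lin{Q_{yz_1}}\cap\lin{Q_{yz_2}})\leq 1$. Write $V_i:=\lin{Q_{yz_i}}$; by \autoref{thm:Px-const}(b) each $V_i$ is a $\PP^3$. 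Since $V_1$ is a fixed $3$-plane and $Y\cap V_1$ is a proper (hence $2$-dimensional) closed subset of $Y$, a general $z_2\in Y$ satisfies $z_2\notin V_1$; as $z_2\in Q_{yz_2}\subseteq V_2$ this gives $V_2\not\subseteq V_1$, so $\dim(V_1\cap V_2)\leq 2$ for free. Thus I only have to exclude the case $\dim(V_1\cap V_2)=2$, and I will do so by contradiction, assuming it for general $z_1,z_2$.

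The plan is to repackage the whole family of quadrics through the common point $y$. Because $y\in Q_{yz}$ and $Q_{yz}$ is a quadric surface spanning $V_z$, coning from $y$ recovers the span, $V_z=\Cone_y(Q_{yz})$, and hence $\Cone_y(Y)=\overline{\bigcup_z\Cone_y(Q_{yz})}=\overline{\bigcup_z V_z}$ is a $4$-dimensional cone inside $\lin{Y}$. I then project from $y$ by $\pi_y\colon\PP^6\dashrightarrow\PP^5$: each $\bar V_z:=\pi_y(V_z)$ is a $2$-plane, the projected quadric fills it (projection of a quadric surface from a smooth point on it is dominant onto the plane, so $\pi_y(Q_{yz})=\bar V_z$), these $2$-planes cover $\bar Y:=\pi_y(Y)$, and $\bar Y$ spans $\pi_y(\lin{Y})$, which has dimension $\geq 4$ by \autoref{thm:linY-n-1-pl}. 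Since $y\in V_z\cap V_{z'}$, one has $\dim(\bar V_z\cap\bar V_{z'})=\dim(V_z\cap V_{z'})-1$, so the assumption becomes the statement that two general $2$-planes of the covering family $\{\bar V_z\}$ meet along a line rather than in a single point.

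To finish I would analyse this family of $2$-planes in $\PP^5$. Fixing $\bar V_1$ and letting $z'$ vary, the lines $\bar V_1\cap\bar V_{z'}\subseteq\bar V_1=\PP^2$ either sweep out $\bar V_1$, form a pencil through a common point, or stay constant, which correspond respectively to the $\bar V_z$ lying in a common $\PP^3$, passing through a common point, or sharing a common line. If the $\bar V_z$ lie in a common $\PP^3$ then $\bar Y\subseteq\PP^3$, forcing $\lin{Y}\subseteq\PP^4$ and contradicting \autoref{thm:linY-n-1-pl}; the common-point case is similarly degenerate. The remaining, essential, case is that all $\bar V_z$ contain a fixed line, i.e.\ all $V_z$ contain a fixed $2$-plane $\Pi_0\ni y$, so $\Cone_y(Y)$ is a cone with vertex $\Pi_0$. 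Here I would cut the quadrics with $\Pi_0$: each $Q_{yz}\cap\Pi_0$ is a conic contained in the plane curve $Y\cap\Pi_0$, and since a fixed curve of degree $\leq\deg Y\leq 4$ (by \autoref{degY34}) cannot contain infinitely many distinct conics, either $\Pi_0\subseteq Y$ or all the $Q_{yz}$ share one fixed conic; in either situation I expect a contradiction with a general $Q_{yz}$ spanning a genuine $\PP^3$, via \autoref{thm:conex=coney} applied to $Y$ together with \autoref{thm:linY-n-1-pl}.

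The hard part is exactly this cone-like subcase: the dimension counts are by themselves consistent with all the spans $V_z$ sharing a common plane through $y$, so excluding it genuinely requires the degree-$2$ structure of the individual $Q_{yz}$ and the global bound $\deg Y\leq 4$, most plausibly run through the cone lemma \autoref{thm:conex=coney} to collapse $\lin{Y}$ to a $4$-plane. A secondary technical point to nail down is the justification that ``two general planes meet in a line'' really forces the family $\{\bar V_z\}$ into one of the three enumerated types, i.e.\ controlling the family of intersection lines inside the fixed plane $\bar V_1$.
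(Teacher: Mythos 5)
Your reduction to excluding $\dim(\lin{Q_{yz_1}} \cap \lin{Q_{yz_2}}) = 2$, and the observation that the ``hence'' part is automatic, are both fine; but the proof has a genuine gap exactly where you flag it: the subcase in which, after projecting from $y$, all the planes $\bar V_z$ contain a fixed line --- equivalently, all the $3$-planes $\lin{Q_{yz}}$ contain a fixed $2$-plane $\Pi_0 \ni y$. Nothing in your sketch rules this out. The degree argument you propose only shows that the conics $Q_{yz}\cap\Pi_0$ are eventually constant (or that $\Pi_0\subset Y$), and neither alternative is by itself contradictory: $\Cone_y Y$ being a $4$-dimensional cone with vertex plane $\Pi_0$ is perfectly compatible with $\dim\lin{Y}\geq 5$, and \autoref{thm:conex=coney} cannot be invoked because you have no equality of cones $\Cone_{y}Y=\Cone_{y'}Y$ at two distinct general points. (A smaller issue: your trichotomy ``sweep / pencil / constant'' is not justified as stated; pairwise intersection in lines classically gives the dichotomy ``all in a common $\PP^3$ or all through a common line'', the pencil case folding into these --- but that still leaves the common-line case, which is the hard one, open.)

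The paper closes this by a move your setup cannot make: it keeps $z_1, z_2$ fixed and varies the base point $y$ along the quadric $Q_{y_0z_2}$, using the preceding claim (\autoref{Lyz=Ltildeyz}) that $Q_{yz_2}=Q_{y_0z_2}$, hence $\lin{Q_{yz_2}}=\lin{Q_{y_0z_2}}$, for general $y \in Q_{y_0z_2}$. Under the absurd hypothesis, each moving $3$-plane $\lin{Q_{yz_1}}$ then meets the \emph{fixed} $3$-plane $\lin{Q_{y_0z_2}}$ in dimension $\geq 2$; these intersections pass through $y$, and as $y$ sweeps the quadric $Q_{y_0z_2}$ their union fills up $\lin{Q_{y_0z_2}}$. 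Taking cones from $z_1$ (note $\Cone_{z_1}Q_{yz_1}=\lin{Q_{yz_1}}$ and $Y=\overline{\bigcup_y Q_{yz_1}}$) gives $\Cone_{z_1}Y\supset\lin{Q_{y_0z_2}}$, hence $\Cone_{z_1}Y = \Cone_{z_1}\lin{Q_{y_0z_2}}$, a $4$-plane, contradicting $\dim\lin{Y}\geq 5$. The essential difference is that fixing one of the two $3$-planes via \autoref{Lyz=Ltildeyz} turns ``two moving planes meet in dimension $2$'' into ``a moving $3$-plane always meets a fixed $3$-plane in dimension $2$'', which is strong enough to collapse a cone over $Y$ to a linear space; your fixed-$y$, moving-$z$ formulation loses this leverage, and that is precisely why the common-plane case remains unresolvable with the tools you list.
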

  \begin{proof}
    Suppose $\dim \lin{Q_{yz_1}} \cap \lin{Q_{yz_2}} \geq 2$
    for general $y,z_1,z_2 \in Y$.
    First we take general points $y_0, z_1, z_2 \in Y$.
    By generality, $z_1 \notin \lin{Q_{y_0z_2}}$.
    For general  $y \in Q_{y_0z_2}$,
    we have $\lin{Q_{yz_2}}=\lin{Q_{y_0z_2}}$ because of \autoref{Lyz=Ltildeyz}.
    Consider an open subset $Y\spcirc \subset Y$ containing $y_0$
    such that
    $\dim \lin{Q_{yz_1}} \cap \lin{Q_{yz_2}} \geq 2$
    for $y \in Y\spcirc$.

    Since
    $Y = \overline{\bigcup_{y \in Q_{y_0z_2}\cap Y\spcirc} Q_{yz_1}}$,
    we have
    \[
    \Cone_{z_1}Y
    = \overline{\bigcup_{y \in Q_{y_0z_2}\cap Y\spcirc} \lin{Q_{yz_1}}},
    \]
    where the right hand side contains $Q_{y_0z_2}$; in fact, it contains the $3$-plane $\lin{Q_{y_0z_2}}$
    since each $\lin{Q_{yz_1}}$ satisfies $\dim (\lin{Q_{yz_1}} \cap \lin{Q_{y_0z_2}}) \geq 2$.
    Hence $\Cone_{z_1}Y = \Cone_{z_1}\lin{Q_{y_0z_2}}$, which is a $4$-plane, a contradiction to $\dim \lin{Y} \geq 5$.
  \end{proof}

  \begin{claim}
    $K_{y}^{z_1z_2} \subset Q_{yz_1}$.
    Hence
    there exists an irreducible component
    $K_y^{z_1}$ of $Q_{yz_1} \cap \TT_yQ_{yz_1} \subset \lin{Q_{yz_1}}$
    such that $K_y^{z_1} = K_{y}^{z_1z_2}$ for general $z_2 \in Y$.
  \end{claim}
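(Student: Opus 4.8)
The plan is to first invoke the preceding claim, which gives that $K_y^{z_1z_2} = \lin{Q_{yz_1}} \cap \lin{Q_{yz_2}}$ is a \emph{line} through $y$, and then to upgrade the inclusion $K_y^{z_1z_2} \subset \lin{Q_{yz_1}}$ to $K_y^{z_1z_2} \subset Q_{yz_1}$ by analysing the intersection $Q_{yz_1} \cap Q_{yz_2}$ locally at the point $y$.

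First I would note that, since $y \in Y$ is general and the characteristic is zero, $Y$ is smooth at $y$, so $\sO_{Y,y}$ is a regular local ring, in particular a unique factorization domain. As $Q_{yz_1}$ and $Q_{yz_2}$ are irreducible surfaces in the $3$-fold $Y$ passing through $y$, each is then locally principal at $y$, cut out near $y$ by a single $f_i \in \sO_{Y,y}$. By Krull's height theorem, every irreducible component of $Q_{yz_1} \cap Q_{yz_2}$ through $y$ (locally defined by $f_1 = f_2 = 0$) has dimension $\geq 3-2 = 1$. On the other hand $Q_{yz_1} \cap Q_{yz_2} \subset \lin{Q_{yz_1}} \cap \lin{Q_{yz_2}} = K_y^{z_1z_2}$, a line; hence this component is a positive-dimensional subvariety of a line, so it equals the whole line. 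This yields $K_y^{z_1z_2} \subset Q_{yz_1} \cap Q_{yz_2}$, and in particular $K_y^{z_1z_2} \subset Q_{yz_1}$.

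For the second assertion I would combine $K_y^{z_1z_2} \subset Q_{yz_1}$ with the definitional inclusion $K_y^{z_1z_2} = \TT_y Q_{yz_1} \cap \TT_y Q_{yz_2} \subset \TT_y Q_{yz_1}$ to obtain $K_y^{z_1z_2} \subset Q_{yz_1} \cap \TT_y Q_{yz_1}$. Since $y$ is a smooth point of the irreducible quadric surface $Q_{yz_1}$, the scheme $Q_{yz_1} \cap \TT_y Q_{yz_1}$ is a plane conic singular at $y$, hence a union of at most two lines through $y$; in particular it has finitely many irreducible components, and $K_y^{z_1z_2}$ is one of them. As $(y, z_1)$ is fixed while $z_2$ ranges over the irreducible variety $Y$, the value $K_y^{z_1z_2}$ lies in this fixed finite set, so by generality of $z_2$ it is constant, equal to some fixed component $K_y^{z_1}$ of $Q_{yz_1} \cap \TT_y Q_{yz_1}$.

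The main obstacle is precisely the positive-dimensionality of $Q_{yz_1} \cap Q_{yz_2}$ at $y$, and this is where smoothness of $Y$ at the general point $y$ is essential: it makes the two surfaces locally principal, so that Krull's theorem forces their intersection to be at least a curve, which the line $K_y^{z_1z_2}$ then captures. Were $Y$ singular at $y$, the two surfaces could a priori meet only at the isolated point $y$ through the singularity, and one could no longer conclude that $K_y^{z_1z_2}$ lies on $Q_{yz_1}$.
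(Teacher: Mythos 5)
Your proof is correct, but it takes a genuinely different route from the paper's. For the key containment $K_y^{z_1z_2} \subset Q_{yz_1}$, the paper argues globally and by contradiction: assuming $K_{\tilde y}^{zw} \not\subset Q_{yz}$ for general data, it sweeps out $Y$ by the quadrics $Q_{\tilde y w}$ with $\tilde y \in Q_{yz}$ general, deduces $\lin{Q_{yz}} \subset \Cone_w Y$, and concludes that $\Cone_w Y$ would then be a $4$-plane, contradicting the earlier claim that $\dim \lin{Y} \geq 5$. Your argument is local and direct: generic smoothness of $Y$ at $y$ makes the two quadric surfaces locally principal at $y$, Krull's height theorem forces every component of $Q_{yz_1} \cap Q_{yz_2}$ through $y$ to have dimension $\geq 1$, and since the whole intersection lies in the line $K_y^{z_1z_2} = \lin{Q_{yz_1}} \cap \lin{Q_{yz_2}}$ supplied by the preceding claim, that component must be the line itself. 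This is shorter, stays away from the cone computations, does not consume the hypothesis $\dim\lin{Y} \geq 5$, and yields the slightly stronger conclusion $K_y^{z_1z_2} \subset Q_{yz_1} \cap Q_{yz_2}$; the paper's route, by contrast, remains within the projective toolkit (spans, cones, sweeping families) that it uses everywhere else. Your handling of the second assertion (finitely many components of $Q_{yz_1} \cap \TT_y Q_{yz_1}$, irreducibility of the parameter space of $z_2$, hence constancy for general $z_2$) is essentially identical to the paper's.

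One small point you assert without justification: that $y$ is a smooth point of $Q_{yz_1}$, which you need so that $Q_{yz_1} \cap \TT_y Q_{yz_1}$ is a plane conic and hence has finitely many components. This is true, and the paper uses it implicitly as well, but it deserves a line: if the irreducible quadric surface $Q_{yz_1}$ were singular at $y$, it would be a cone with vertex $y$, and every plane section through the vertex of such a cone is a union of lines, so no smooth conic on $Q_{yz_1}$ could pass through $y$; since $y$ lies on the smooth conics of $R_{yz_1}'$ sweeping out $Q_{yz_1}$, this is impossible. With that remark added, your argument is complete.
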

  \begin{proof}
    Suppose $K_{y}^{z_1z_2} \not\subset Q_{yz_1}$.
    Let $z,w \in Y$ be general
    such that $w \not\in Q_{yz}$.
    For general $\tilde{y} \in Q_{yz}$,
    we have $Q_{\tilde{y}z} = Q_{yz}$.
    
    Since $Y$ is the closure of
    $\bigcup_{\tilde{y} \in Q_{yz}\gen} Q_{\tilde{y}w}$, we have that
    $\Cone_wY$ is the closure of $\bigcup_{\tilde{y} \in Q_{yz}\gen} \lin{Q_{\tilde{y}w}}$.
    Since
    $\tilde y \in K_{\tilde y}^{zw}$,
    we have
    ${Q_{yz}} \subset \overline{\bigcup_{\tilde{y} \in Q_{yz}\gen} K_{\tilde y}^{zw}}$.
    Moreover, since $Q_{yz}$ is codimension one in $\lin{Q_{yz}}$,
    and since $K_{\tilde y}^{zw} \not\subset Q_{yz}$ and $K_{\tilde y}^{zw} \subset \lin{Q_{yz}}$,
    we have
    \[
    \lin{Q_{yz}} \subset \overline{\bigcup_{\tilde{y} \in Q_{yz}\gen} K_{\tilde y}^{zw}}.
    \]
    Since the right hand side is contained in
    $\overline{\bigcup_{\tilde{y} \in Q_{yz}\gen} \lin{Q_{\tilde{y}w}}}$,
    it holds $\lin{Q_{yz}} \subset \Cone_wY$.
    Then $\Cone_w\lin{Q_{yz}} \subset \Cone_wY$,
    where the left hand side is a $4$-plane
    and the right hand side is of dimension $4$.
    It holds that $\Cone_wY$ is a $4$-plane,
    a contradiction.

    Note that, since $K_{y}^{z_1z_2}$ is contained in $Q_{yz_1} \cap \TT_yQ_{yz_1} \subset \lin{Q_{yz_1}}$,
    there exists an irreducible component of $Q_{yz_1} \cap \TT_yQ_{yz_1}$
    which is equal to $K_{y}^{z_1z_2}$ for general $z_2 \in Y$. Hence the latter statement holds.
  \end{proof}

  \begin{claim}
    $\lin{Y} \subset \PP^6$ is of dimension $5$.
  \end{claim}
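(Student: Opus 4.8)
The plan is to rule out the possibility that $Y$ is nondegenerate, i.e. that $\dim\lin{Y} = 6$; combined with \autoref{degY34} this forces $\dim\lin{Y} = 5$. First I would extract a canonical line through a general point. Since $K_y^{z_1z_2} = \TT_yQ_{yz_1}\cap\TT_yQ_{yz_2}$ is symmetric in $z_1,z_2$, the preceding claim gives $K_y^{z_1} = K_y^{z_1z_2} = K_y^{z_2z_1} = K_y^{z_2}$ for general $z_1,z_2$; hence there is a \emph{single} line $K_y\subset Y$ through $y$, independent of the auxiliary point, with $K_y = \lin{Q_{yz_1}}\cap\lin{Q_{yz_2}}$. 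In particular $K_y\subset\lin{Q_{yz}}$ for every general $z$ (recall each $Q_{yz}$ spans a $\PP^3$ by \autoref{claim:quad}), and for general $y,w$ the $3$-plane $\lin{Q_{yw}}$ contains both $K_y$ and $K_w$, since $K_w\subset\lin{Q_{wy}} = \lin{Q_{yw}}$.

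Next I would analyze the cone $\Cone_yY$. As each $Q_{yz}$ is a quadric surface through $y$, we have $\lin{Q_{yz}} = \Cone_yQ_{yz}$, and since $Y = \Loc(R_y) = \overline{\bigcup_z Q_{yz}}$ by \autoref{thm:u2surj}, this yields $\Cone_yY = \overline{\bigcup_z\lin{Q_{yz}}}$. For general $y$ the nonlinear $Y$ is not a cone with vertex $y$, so $\dim\Cone_yY = \dim Y + 1 = 4$; thus $\Cone_yY$ is a $4$-dimensional union of $3$-planes all containing the fixed line $K_y$. Projecting from $K_y$ by $\pi_{K_y}\colon\PP^6\dashrightarrow\PP^4$ collapses each $\lin{Q_{yz}}$, equivalently each $Q_{yz}\supset K_y$, to a line, the fibre of $\pi_{K_y}|_{\lin{Q_{yz}}}$ being a $\PP^2$; hence $\pi_{K_y}(Y) = \overline{\bigcup_z\pi_{K_y}(Q_{yz})}$ is a union of lines in $\PP^4$ of dimension $\dim\Cone_yY - 2 = 2$. (The lines genuinely vary: if they all coincided, every $Q_{yz}$ would lie in one $\PP^3$ and $Y$ would be contained in it, contradicting $\dim Y = 3$ and nonlinearity.) So $Y$ fibres over a surface $\bar Y\subset\PP^4$ with one-dimensional fibres.

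To finish, assume for contradiction $\dim\lin{Y} = 6$. A nondegenerate irreducible threefold in $\PP^6$ has degree at least $\codim + 1 = 4$, so by \autoref{degY34} the case $\deg Y = 3$ is already impossible (it forces degeneracy, whence $\dim\lin{Y}\le 5$), and in the remaining case $\deg Y = 4$ the variety $Y$ must be of minimal degree. By the classification of minimal-degree varieties, $Y$ is then a three-dimensional rational normal scroll or a cone over the Veronese surface $\PP^2\hookrightarrow\PP^5$. I would rule out each model using the line $K_y$ together with the fact that a general $Q_{yz}$ is a quadric surface carrying $K_y$ through $y$: for the cone over the Veronese the only lines through a general point are the rulings to the vertex, which forces every $Q_{yz}\supset K_y$ to pass through the vertex and hence to be a singular (reducible or cone) quadric, incompatible with $Q_{yz} = \tilde M\cap\Lsx$ being irreducible as produced in \autoref{claim:quad}; and for a scroll one shows that the prescribed smooth quadric surfaces spanning $3$-planes, each containing a $K$-line through a general point, cannot be the sections $Q_{yz}$ of $Y$.

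I expect this last step — systematically eliminating the finitely many minimal-degree models — to be the main obstacle, since it is precisely where the abstract line-and-cone data built above must be matched against the explicit projective geometry of $3$-fold scrolls and of the cone over the Veronese. The structural reductions (the canonical line $K_y$, the identification $\Cone_yY = \overline{\bigcup_z\lin{Q_{yz}}}$, and the projection onto the surface $\bar Y$) are the routine part; the degree bound disposes of $\deg Y = 3$ at once, so all the real work is concentrated in the nondegenerate degree-$4$ case.
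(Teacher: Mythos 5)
Your structural reductions are correct and in fact parallel what the paper itself establishes: the line $K_y^{z_1}$ is independent of the auxiliary point (so a canonical line $K_y \subset Y$ through a general $y$ exists), and $\Cone_yY = \overline{\bigcup_z \lin{Q_{yz}}}$ is a $4$-fold which is a cone with vertex containing $K_y$. Your overall strategy --- assume nondegeneracy, invoke the degree bound and the Del Pezzo--Bertini classification of minimal-degree varieties, then eliminate the models --- is genuinely different from the paper's, which never uses classification: the paper splits into the two cases $K_y^{z_1} \cap K_{\tilde y}^{\tilde z_1} \neq \emptyset$ and $= \emptyset$ for two general points, and in each case directly produces either a vertex $v$ with $Y \subset \TT_vX = \PP^5$ or a projection $\pi_y$ realizing $\pi_y(Y)$ as a quadric cone spanning only a $\PP^4$, whence $\lin{Y} = \PP^5$.

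The genuine gap is exactly the step you defer: the elimination of the minimal-degree models is not carried out, and the one elimination you do sketch fails. For the cone over the Veronese surface, you correctly force every $Q_{yz}$ to be a quadric cone through the vertex, but then declare this ``incompatible with $Q_{yz} = \tilde M \cap \Lsx$ being irreducible as produced in \autoref{claim:quad}.'' A quadric cone \emph{is} irreducible; \autoref{claim:quad} only yields $\deg \Lsx = 2$ and nowhere excludes singular quadrics, so no contradiction is obtained. (This case is easily killed, but by a different mechanism --- the one the paper uses in its Step 1: if $Y$ is any cone with vertex $v$, its rulings are lines in $X$ through $v$, so $Y \subset \TT_vX$, a hyperplane since $X$ is smooth, contradicting nondegeneracy. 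The same observation disposes of all scroll models $S(a_1,a_2,a_3)$ with some $a_i = 0$.) For the scrolls you offer no argument at all, only ``one shows that \dots cannot be the sections $Q_{yz}$.'' After cones are removed the single remaining model is $S(1,1,2)$, and eliminating it requires actual work: writing divisor classes on $S(1,1,2) = \PP(\sO_{\PP^1}(1)\oplus\sO_{\PP^1}(1)\oplus\sO_{\PP^1}(2))$ as $aH+bF$, an irreducible non-fiber surface of degree $4a+b=2$ forces $(a,b)=(1,-2)$, and the class $H-2F$ contains a unique effective divisor, the sub-scroll $S(1,1)$; a single fixed quadric surface cannot contain two general points of $Y$, contradicting the existence of the family $Q_{yz}$. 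Since you explicitly leave this part --- ``where all the real work is concentrated'' --- as an expectation rather than an argument, the proposal as written is a strategy outline whose crucial step is missing.
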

  \begin{proof}
    Let $y, z_1 \in Y$ be general.
    Since $Y$ is the closure of $\bigcup_{z_2 \in Y\gen} Q_{yz_2}$,
    it follows that $\Cone_yY$ is the closure of $\bigcup_{z_2 \in Y\gen} \lin{Q_{yz_2}}$.
    Since $K_y^{z_1} = K_{y}^{z_1z_2} \subset \lin{Q_{yz_2}}$ for general $z_2 \in Y$,
    we have that $\Cone_yY$ is a cone with vertex $K_y^{z_1} = \PP^{1}$.

    Let $\tilde y, \tilde z_1 \in Y$ be general.
    We may assume
    $\tilde y \notin K_y^{z_1}$ and $y \notin K_{\tilde y}^{\tilde z_1}$.
    We show the statement in the following two steps.
    \\    

    \begin{inparaenum}[\noindent\itshape{}Step 1.]
    \item
      Suppose that two lines $K_y^{z_1}$ and $K_{\tilde{y}}^{\tilde{z_1}}$ intersect at a point $v$.
      Then, for general $s, t \in Y$, the line $K_s^t$ also intersects with each of $K_y^{z_1}$ and $K_{\tilde{y}}^{\tilde{z_1}}$.
      If $v \notin K_s^t$, then
      $s \in K_s^t \subset \lin{K_y^{z_1}, K_{\tilde{y}}^{\tilde{z_1}}}$;
      hence we have
      $Y \subset \lin{K_y^{z_1}, K_{\tilde{y}}^{\tilde{z_1}}} = \PP^2$,
      a contradiction.

      If $v \in K_s^t$, then since $s \in Y$ is general and $\overline{sv} = K_s^t \subset Y$, it follows that $Y$ is a cone with vertex $v$;
      hence $Y \subset \TT_vX$, which implies $\lin{Y} = \TT_vX = \PP^5$.
      \\

    \item
      Suppose $K_y^{z_1}  \cap K_{\tilde{y}}^{\tilde{z_1}} = \emptyset$.
      We have $K_{y}^{z_1} = K_{y}^{z_1\tilde y} \subset Q_{y\tilde y} \subset \lin{Q_{y\tilde y}}$.
      In the same way, $K_{\tilde{y}}^{\tilde{z_1}} \subset Q_{y\tilde y}$.
      Since
      $K_{\tilde y}^{\tilde z_1} = \lin{Q_{\tilde y \tilde z_1}} \cap \lin{Q_{y\tilde y}}$,
      we have
      \[
      K_y^{z_1} \cap \lin{Q_{\tilde y \tilde z_1}} = K_y^{z_1}  \cap K_{\tilde{y}}^{\tilde{z_1}} = \emptyset.
      \]
      
      For the linear projection $\pi_y: \PP^6 \dashrightarrow \PP^5$,
      we consider $\pi_{y}(Y) = \pi_{y}(\Cone_yY)$, a cone with vertex $w:= \pi_y(K_y^{z_1})$.
      Since $w \not\in \pi_y(\lin{Q_{\tilde y \tilde z_1}})$
      and since $\pi_y(Q_{\tilde y\tilde z_1})$       is of codimension $1$ in $\pi_y(Y)$, it follows that $\pi_y(Y)$ is a cone of the quadric $\pi_y(Q_{\tilde y\tilde z_1})$ with vertex $w$.
      Then $\lin{\pi_y(Y)} = \PP^4$.
      Since $y \in Y$ is general,
      it follows that $Y$ is a $3$-fold of degree $3$ in $\lin{Y} = \PP^{5}$.
    \end{inparaenum}
  \end{proof}

  Let us complete the proof. By the above claim, $\lin{Y} = \PP^5$.
  Take $X' := X \cap \lin{Y}$.
  Since the Gauss map $\gamma = \gamma_X: X \rightarrow \Pv[6]$ is finite,
  $X'$ is singular at most finitely many points
  ($X'$ is singular at $x$ if and only if $\gamma(x) = \lin{Y}$
  in $\Pv[6]$).
  In particular, $X'$ is irreducible.
  (This is because, if $X' = X'_1 \cup X'_2 \subset \lin{Y} = \PP^{5}$,
  then $X'_1 \cap X'_2 \subset \Sing X'$.)
  Here $Y \subset X' \subset \lin{Y}$.

  Take a general hyperplane $M \subset \lin{Y} = \PP^{5}$
  such that $X'' := X' \cap M$ is smooth.
  Then we have
  \[
  Y \cap M \subset X'' \subset M = \PP^4,
  \]
  where $Y \cap M$ is a surface of degree $\leq 4$ as in \autoref{degY34}
  and $X''$ is a smooth $3$-fold of degree $6$ in $\PP^4$.
  This is a contradiction
  since $\Pic(\PP^{4}) \simeq \ZZ \rightarrow \Pic(X''): \sO_{\PP}(1) \rightarrow \sO_{\PP}(1)|_{X''}$
  is isomorphic due to the Lefschetz theorem.
\end{proof}

\begin{ex}\label{thm:ex-Fermat}
  Let $X \subset \PP^7$ be the Fermat hypersurface of degree $6$.
  Then $\PP^3 \subset X$.
  Thus
  $R_2(\PP^3) \subset R_2(X)$ is of dimension $8$,
  and the expected dimension of $R_2(X)$ is $3n-2d-2 = 7$.
  For a general hyperplane $\PP^6 \subset \PP^7$,
  we have $\PP^2 \subset X_1 := X \cap \PP^6$.
  Then
  $R_2(\PP^2) \subset R_2(X_1)$ is of dimension $5$
  and the expected dimension of $R_2(X_1)$ is $4$.

  In these examples,
  each $C \in R_2(\PP^3)$ (resp. $C \in R_2(\PP^2)$)
  satisfies $\lin{C} \subset \PP^3 \subset X$ (resp. $\lin{C} = \PP^2 \subset X_1$).
\end{ex}

\begin{ex}\label{thm:ex-sub-Cone}
  Let $X \subset \PP^{10}$ be the smooth hypersurface of degree $10$
  defined by the following polynomial,
  \[
  f := x_0^{8}(x_0^2 + x_1^2 + x_2^2)
  + \sum_{i=1}^5 x_i^{10} - \sum_{i=1}^5 x_{i+5}^{10}.
  \]
  Then the expected dimension of $R_2(X)$ is $8$.
  We consider the $5$-plane
  \[
  M := \bigcap_{i=1}^{5} (x_i - x_{i+5} = 0) = \PP^5 \subset \PP^{10},
  \]
  and take
  $Y \subset X \cap M$,
  to be the zero set of
  $x_0^2 + x_1^2 + x_2^2$
  in $M$.
  Since $Y$ is a cone of the conic $(x_0^2 + x_1^2 + x_2^2 = 0) \subset \PP^2$, we have a birational map
  $R_2(Y) \rightarrow \Gr(2, \PP^5): C \mapsto \lin{C}$.
  In particular,
  $\dim R_2(Y) = 9$.
  Thus, for an irreducible component
  $R \subset R_2(X)$ containing $R_2(Y)$,
  the dimension of $R$ is greater that the expected dimension.
\end{ex}

\end{document}